\newtheorem{thm}{Theorem}
\newtheorem{lem}{Lemma}
\newtheorem{prop}{Proposition}
\newtheorem{cor}{Corollary}
\newtheorem{conj}{Conjecture}
\numberwithin{thm}{section}
\numberwithin{lem}{section}
\numberwithin{prop}{section}
\numberwithin{cor}{section}
\numberwithin{conj}{section}
\numberwithin{equation}{section}
\def\al{\alpha}
\def\be{\beta}
\def\ga{\gamma}
\def\de{\delta}
\def\la{\lambda}
\def\si{\sigma}
\def\th{\theta}
\def\zt{\zeta}
\def\ep{\epsilon}
\def\om{\omega}
\def\Ga{\Gamma}
\def\A{\mathcal A}
\def\P{\mathcal P}
\def\B{\mathcal B}
\def\Z{\mathcal Z}
\def\H{\mathfrak H}
\def\E{\mathfrak E}
\def\Q{\mathbb Q}
\def\R{\mathbb R}
\def\C{\mathbb C}
\def\T{\mathcal T}
\def\L{\mathcal L}
\def\<{\langle}
\def\>{\rangle}
\def\dim{\operatorname{dim}}
\def\card{\operatorname{card}}
\def\Sy{\operatorname{Sym}}
\def\QS{\operatorname{QSym}}
\def\coeff{\operatorname{coeff}}
\def\ev{\operatorname{ev}}
\def\op{\diamond}
\def\Re{\operatorname{Re}}
\mathchardef\pFcomma=\mathcode`,
\newcommand*\pFq[5]{%
\begingroup
\begingroup\lccode`~=`,
\lowercase{\endgroup\def~}{\pFcomma\mkern\pFqskip}%
\mathcode`,=\string"8000
{}_{#1}F_{#2}\biggl[\genfrac..{0pt}{}{#3}{#4};#5\biggr]%
\endgroup
}
\begin{document}
\title{An Odd Variant of Multiple Zeta Values}
\author{Michael E. Hoffman\\
\small Dept. of Mathematics, U. S. Naval Academy\\[-0.8ex]
\small Annapolis, MD 21402 USA\\[-0.8ex]
\small\texttt{meh@usna.edu}}
\date{January 29, 2019\\
\small Keywords:  multiple zeta values, multiple Hurwitz zeta function,
colored multiple zeta values, quasi-symmetric functions,
generalized hypergeometric function, Catalan's constant,
Dirichlet beta function\\
\small MR Classification:  Primary 11M32; Secondary 05E05, 11M35, 33C20}
\maketitle
\begin{abstract}
For positive integers $i_1,\dots,i_k$ with $i_1>1$, we define the 
multiple $t$-value $t(i_1,\dots,i_k)$ as the sum of those terms of 
the usual infinite series for the multiple zeta value $\zt(i_1,\dots,i_k)$ 
with odd denominators.
Multiple $t$-values can be written as rational linear combinations of 
the alternating or ``colored'' multiple zeta values.
Using known results for colored multiple zeta values, we
obtain tables of multiple $t$-values through weight 7,
suggesting some interesting conjectures, including one that the 
dimension of the rational vector space generated by weight-$n$ 
multiple $t$-values has dimension equal to the $n$th Fibonacci number.
Like the multiple zeta values, the multiple $t$-values can 
be multiplied according to the rules of the harmonic algebra.
Using this fact, we obtain explicit formulas for multiple 
$t$-values with repeated arguments analogous to those known 
for multiple zeta values.
We express the generating function of the height one multiple
$t$-values $t(n,1,\dots,1)$ in terms of a generalized hypergeometric function.
We also define alternating multiple $t$-values and prove some results
about them.
\end{abstract}
\section{Introduction}
In the past few decades the multiple zeta values
\[
\zt(i_1,\dots,i_k)=\sum_{n_1>n_2>\dots>n_k\ge 1}
\frac1{n_1^{i_1}n_2^{i_2}\cdots n_k^{i_k}} 
\]
have appeared prominently in both number theory and physics.
In this paper we consider the related quantities
\begin{equation}
\label{mtv}
t(i_1,\dots,i_k)=\sum_{\substack{n_1>n_2>\dots>i_k\ge 1\\ \text{$n_i$ odd}}}
\frac1{n_1^{i_1}n_2^{i_2}\cdots n_k^{i_k}} ,
\end{equation}
which we call multiple $t$-values.  In both these definitions
$i_1,i_2,\dots, i_k$ are positive integers
with $i_1>1$; we call $k$ the ``depth'' and $i_1+\dots+i_k$ the
``weight.''
Our study reveals that multiple $t$-values have remarkable parallels
to, and contrasts with, multiple zeta values.
\par
Our notation is adapted from N. Nielsen \cite{N}, who wrote $t_n$
for $t(n)$ and gave the formula
\begin{equation}
\label{tniel}
\sum_{i=1}^{n-1}t(2i)t(2n-2i)=\frac{2n-1}2 t(2n) .
\end{equation}
This may be compared with the formula (also given in \cite{N})
\begin{equation}
\label{zniel}
\sum_{i=1}^{n-1}\zt(2i)\zt(2n-2i)=\frac{2n+1}2\zt(2n).
\end{equation}
Of course
\[
\zt(i)=\sum_{n\ge 1}\frac1{n^i}=\sum_{\text{$n$ odd}}\frac1{n^i}+
\sum_{\text{$n$ even}}\frac1{n^i}=t(i)+\frac1{2^i}\zt(i),
\]
so that $t(i)=(1-2^{-i})\zt(i)$ and the classical formula
\[
\zt(2n)=\frac{(-1)^{n-1}B_{2n}(2\pi)^{2n}}{2(2n)!}
\]
is paralleled by 
\begin{equation}
\label{teur}
t(2n)=\frac{(-1)^{n-1}B_{2n}(2^{2n}-1)\pi^{2n}}{2(2n)!} .
\end{equation}
Eq. (\ref{teur}) can be expressed by the generating function
\begin{equation}
\label{tangen}
\sum_{n=1}^\infty t(2n)x^{2n-1}=\frac{\pi x}4\tan\left(\frac{\pi x}2\right) ,
\end{equation}
from which Nielsen's formula (\ref{tniel}) follows easily by
differentiating both sides of Eq. (\ref{tangen}) and then 
comparing that to the result of squaring both sides.
\par
The multiplication of multiple $t$-values as series works just like
the multiplication of multiple zeta values as series, so that
we have, for example,
\begin{multline*}
t(2)t(3,1,1)=t(2,3,1,1)+t(5,1,1)+t(3,2,1,1)+t(3,3,1)+t(3,1,2,1)\\
+t(3,1,2)+t(3,1,1,2).
\end{multline*}
Consequently (paralleling \cite[Thm. 2.2]{H1992}), any symmetric sum of
multiple $t$-values, e.g., $t(3,2,2)+t(2,3,2)+t(2,2,3)$, is a rational
polynomial in ordinary $t$-values, e.g.,
\[
t(3,2,2)+t(2,3,2)+t(2,2,3)=\frac12t(2)^2t(3)-\frac12t(3)t(4)
-t(2)t(5)+t(7) .
\]
In particular, any multiple $t$-value with all its arguments equal to
the same integer $k$ is a rational polynomial in the $t$-values
$t(k),t(2k),t(3k),\dots$.
In view of Eq. (\ref{teur}) above, this means that, when $k$
is even, a multiple $t$-value of the form $t(k,k,\dots,k)$ (with $n$ 
repetitions) is a rational multiple of $\pi^{nk}$, just as with multiple zeta
values.
For example, the well-known identities
\[
\zt(\{2\}_n)=\frac{\pi^{2n}}{(2n+1)!} ,\quad
\zt(\{4\}_n)=\frac{2^{2n+1}\pi^{4n}}{(4n+2)!} ,\quad
\zt(\{6\}_n)=\frac{6(2\pi)^{6n}}{(6n+3)!}
\]
(where $\{k\}_n$ means $k$ repeated $n$ times) have multiple
$t$-value counterparts
\begin{equation}
\label{krep}
t(\{2\}_n)=\frac{\pi^{2n}}{2^{2n}(2n)!} ,\quad
t(\{4\}_n)=\frac{\pi^{4n}}{2^{2n}(4n)!} ,\quad
t(\{6\}_n)=\frac{3\pi^{6n}}{4(6n)!} .
\end{equation}
\par
As with multiple zeta values, one can define $t$-star values by
\begin{equation}
\label{mtsv}
t^{\star}(i_1,\dots,i_k)=\sum_{\substack{n_1\ge n_2\ge\cdots\ge n_k\ge 1\\ \text{$n_i$ odd}}}
\frac1{n_1^{i_1}n_2^{i_2}\cdots n_k^{i_k}} 
\end{equation}
for positive integers $i_1,\dots,i_k$ with $i_1>1$.
S. Muneta \cite{M} gave an identity expressing $\zt^{\star}(\{2m\}_n)$ as 
$\pi^{2mn}$ times a rational polynomial in Bernoulli numbers; similarly, 
in Theorem \ref{eulern} below we give an identity expressing 
$t^{\star}(\{2m\}_n)$ as $\pi^{2mn}$ times a rational polynomial in 
Euler numbers.  
The case $m=1$ is
\[
t^{\star}(\{2\}_n)=\frac{\pi^{2n}}{2^{2n}(2n)!}(-1)^nE_{2n} .
\]
\par
Despite the parallels, the algebra of multiple $t$-values is
quite different in some ways from the algebra of multiple zeta values.
Both the duality theorem and the ``double shuffle relations'' \cite{IKZ}
of multiple zeta values are missing for multiple $t$-values.
The difference already appears in weight 3:  while for multiple
zeta values there is the famous identity $\zt(2,1)=\zt(3)$,
for multiple $t$-values one has instead
\begin{equation}
\label{t21}
t(2,1)=-\frac12t(3)+t(2)\log 2 .
\end{equation}
Nevertheless, as we show in \S4, any multiple $t$-value is a
rational linear combination of ordinary and alternating (or ``colored'')
multiple zeta values, the latter being forms such as
\[
\zt(\bar 3,1)=\sum_{i>j\ge 1}\frac{(-1)^i}{i^3j} .
\]
Existing tables of such values \cite{BJOP,BBV} can be used to give 
formulas for multiple $t$-values in terms of alternating multiple
zeta values; in Appendix A we give such formulas through weight 7.
As with multiple zeta values, all known relations of multiple $t$-values
are homogeneous by weight.
\par
Examination of the tables leads to several conjectures, which are  
presented in \S2 below.
Conjecture \ref{der} asserts that the algebra of multiple
$t$-values admits a weight-decreasing derivation, which can easily
be seen not to exist in the case of multiple zeta values.
Conjecture \ref{fib} states that the rational vector space generated
by the multiple $t$-values of weight $n$ has dimension
equal to the $n$th Fibonacci number.
(Conjecture \ref{sah}, due to B. Saha \cite{Sa}, gives this a concrete
form by proposing a basis for the weight-$n$ multiple $t$-values.)
This compares to the well-known conjecture that the dimension of
the rational vector space of weight-$n$ multiple zeta values is the
$n$th Padovan number.
\par
In \S3 we prove the analogue for multiple $t$-values of the symmetric
sum theorem for multiple zeta values \cite{H1992}.  This implies the
the results for repeated arguments given in Eqs. (\ref{krep}) above.
In \S4 we show how any multiple $t$-value can be written as a sum
of alternating multiple zeta values.
\par
The ``height one'' multiple zeta values $\zt(n,1,\dots,1)$ are rational 
polynomials in the ordinary zeta values $\zt(i)$, $i\ge 2$; indeed this
follows from the generating-function identity
\cite{BBB,H1997}
\begin{equation}
\label{mzvht1}
\sum_{i,j\ge 1}\zt(i+1,\{1\}_{j-1})x^iy^j=
1-\frac{\Ga(1-x)\Ga(1-y)}{\Ga(1-x-y)} .
\end{equation}
It is already apparent from
\[
t(3,1)=-\frac{37}{60}t(4)-\frac12\zt(\bar3,1)+t(3)\log2
\]
that multiple $t$-values of height one are more complicated.  
Nevertheless, in \S5 we express
\[
H(x,y)=\sum_{i,j\ge 1}t(i+1,\{1\}_{j-1})x^iy^j
\]
as the value of a generalized hypergeometric function (Theorem 
\ref{hypg32} below).
In contrast with the generating function (\ref{mzvht1}) for 
height one multiple zeta values, which is symmetric in $x$ and $y$, 
$H(x,y)$ is very far from symmetric; e.g., the series
\[
t(2)+t(2,1)+t(2,1,1)+t(2,1,1,1)+\cdots
\]
converges (and in fact converges to twice Catalan's constant; see 
Eq. (\ref{cat}) below), while
\[
t(2)+t(3)+t(3)+t(4)+\cdots ,
\]
like the corresponding series of multiple zeta values, diverges.
\par
In \S6 we define alternating multiple $t$-values in a manner
analogous to alternating multiple zeta values.  Then one can
write a formula for $t(\bar n,\dots,\bar n)$ in terms of the
$t$-values of even integers and the values of the Dirichlet beta
function at odd integers, and in fact there are explicit formulas
\[
t(\{\bar1\}_k)=(-1)^{\lfloor\frac{k+1}2\rfloor}\frac{\pi^k}{2^{2k}k!}\quad
\text{and}\quad
t(\{\bar 3\}_k)=(-1)^{\lfloor\frac{k+1}2\rfloor}\frac{3\pi^{3k}}{2^{3k+1}(3k)!}.
\]
\par
The multiple $t$-value (\ref{mtv}) can be written in terms of the 
Hurwitz multiple zeta function
\[
\zt(i_1,\dots,i_k;a_1,\dots,a_k)=
\sum_{n_1>\dots>n_k\ge 1}\frac1{(n_1+a_1)^{i_1}\cdots (n_k+a_k)^{i_k}}
\]
discussed in \cite{MS}; taking $a_1=\dots=a_k=-\frac12$, we have
\begin{equation}
\label{hmz}
t(i_1,\dots,i_k)=2^{i_1+\dots+i_k}\zt(i_1,\dots,i_k;-\tfrac12,\dots,-\tfrac12) .
\end{equation}
Double $t$-values $t(n,m)$ are referred to as ``double zeta values of
level 2'' in \cite{KT} and \cite{NT}, where they are written as
$\zt^{\mathbf{oo}}(m,n)$ and $\zt^{\mathbf{o}}(m,n)$ respectively.
\section{Conjectures on the algebra of multiple $t$-values}
\par
Let $\H^1$ be the underlying rational vector space of the noncommutative 
polynomial algebra $\Q\<z_1,z_2,\dots\>$, and let $\H^0$ be the 
subalgebra of of $\H^1$ generated by 1 and those monomials that start
with $z_i$, $i>1$.
From \cite{H1997} we have the following result.
\begin{thm}
\label{quas}
The rational vector space $\H^1$ with the product $*$ defined recursively
by $w_1*1=1*w_1=w_1$ and
\begin{equation}
\label{recur}
z_iw_1*z_jw_2=z_i(w_1*z_jw_2)+z_j(z_iw_1*w_2)+z_{i+j}(w_1*w_2)
\end{equation}
for all monomials $w_1,w_2$ of $\H^1$ is a commutative algebra.  Further, 
$\H^1$ is a polynomial algebra, $\H^0$ is a subalgebra, and $\H^1=\H^0[z_1]$.
\end{thm}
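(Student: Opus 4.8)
The plan is to realize $(\H^1,*)$ concretely as an algebra of quasi-symmetric functions and then read the four assertions off that model. First I would note that the recursion (\ref{recur}) terminates, since each application strictly decreases the combined length of its two arguments, so $*$ is a well-defined bilinear product. To obtain commutativity and associativity at one stroke I would introduce the linear map $\phi\colon\H^1\to\QSym$ sending $1$ to $1$ and a monomial $z_{i_1}\cdots z_{i_k}$ to the monomial quasi-symmetric function $M_{(i_1,\dots,i_k)}=\sum_{n_1>\dots>n_k\ge1}x_{n_1}^{i_1}\cdots x_{n_k}^{i_k}$, which is clearly a vector-space isomorphism. The key step is to verify that the ordinary product of two such functions obeys exactly the recursion (\ref{recur}): grouping the terms of the product $M_{(i,I)}M_{(j,J)}$ (where $I,J$ denote the remaining parts) according to whether the largest summation index is contributed by the first factor alone, by the second alone, or by both (forced to coincide) yields precisely the three summands $z_i(w_1*z_jw_2)$, $z_j(z_iw_1*w_2)$ and $z_{i+j}(w_1*w_2)$. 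Because $\QSym$ sits inside the power-series ring $\Q[[x_1,x_2,\dots]]$, its multiplication is automatically commutative and associative, and $\phi$ carries these properties back to $(\H^1,*)$.

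For the polynomial structure I would invoke the (known, but genuinely nonelementary) theorem that $\QSym$ over $\Q$ is a free polynomial algebra; equivalently, one may equip $\H^1$ with the deconcatenation coproduct, check that it is an algebra map for $*$ so that $\H^1$ becomes a connected graded commutative Hopf algebra, and then apply the structure theorem that any such Hopf algebra over a field of characteristic $0$ is polynomial. The assertion that $\H^0$ is a subalgebra I would instead prove directly from (\ref{recur}): if $w_1=z_iw_1'$ and $w_2=z_jw_2'$ with $i,j>1$, then every monomial on the right-hand side begins with $z_i$, $z_j$ or $z_{i+j}$, all of index $>1$, so a short induction on combined length shows that $\H^0$ is closed under $*$.

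The remaining identity $\H^1=\H^0[z_1]$ is the part I expect to demand the most care. I would study the weight-graded algebra homomorphism $m\colon\H^0\otimes\Q[z_1]\to\H^1$, $u\otimes z_1^{\,j}\mapsto u*z_1^{*j}$, which is well defined because $\H^0$ is a subalgebra and $z_1$ is central. Surjectivity I would prove by induction on the number $\ell$ of leading $z_1$'s of a monomial: when $\ell=0$ the monomial lies in $\H^0$, and for the inductive step, writing the monomial as $z_1^{\ell}v$ with $v\in\H^0$, the product $z_1*(z_1^{\ell-1}v)$ equals $\ell\,z_1^{\ell}v$ plus monomials of strictly smaller $\ell$ (the contraction terms are shorter, and every shuffle insertion outside the leading block lowers $\ell$), so $z_1^{\ell}v$ is recovered modulo terms already in the image. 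Rather than prove injectivity directly, I would finish with a Hilbert-series count: in weight $n$ one has $\dim\H^1_n=2^{n-1}$, the number of compositions of $n$, while the unique factorization of a composition into an initial run of $1$'s followed by a composition with first part $\ge2$ (or empty) gives $\sum_{j\ge0}\dim\H^0_{n-j}=2^{n-1}$ as well; a graded surjection between graded spaces of equal finite dimension in each degree is an isomorphism.

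The main obstacle is twofold. The genuinely nonelementary ingredient is the polynomiality input — that $\QSym$, equivalently a connected graded commutative $\Q$-Hopf algebra, is a polynomial algebra — which I would import rather than derive. The more hands-on difficulty is checking that the quasi-shuffle product of monomial quasi-symmetric functions reproduces (\ref{recur}) precisely, a careful case analysis on the position of the largest summation index; once that identification is in place, commutativity, associativity, and the bookkeeping in the final decomposition all become manageable, the last being rendered painless by the dimension count in place of a direct injectivity argument.
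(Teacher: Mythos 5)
Your proposal is correct, but it does not follow the paper's route, because the paper gives no proof at all: Theorem \ref{quas} is quoted from \cite{H1997}, where commutativity and associativity of $*$ are established by a direct double induction on word length straight from the recursion (\ref{recur}), and $\H^1=\H^0[z_1]$ by explicit manipulation. Your transport-of-structure argument through quasi-symmetric functions effectively merges this theorem with Theorem \ref{sym} of the paper: once you verify that multiplication of the monomial functions obeys (\ref{recur}) --- classifying the terms of $M_{(i,I)}M_{(j,J)}$ by which factor contributes the extreme summation index; note the paper's convention $M_{n_1,\dots,n_p}=\sum_{i_1<\dots<i_p}x_{i_1}^{n_1}\cdots x_{i_p}^{n_p}$ makes this the \emph{smallest} index, while your decreasing convention makes it the largest, and both yield the same recursion --- commutativity and associativity come for free from the power-series ring, whereas the inductive verification of associativity is the most tedious step of the direct approach. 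The price is the imported polynomiality of $\QS$; if you take your Hopf-theoretic alternative instead, one caveat deserves note: $\H^1$ is plain-commutative but has elements of odd weight, so the Leray/Milnor--Moore-type freeness theorem for \emph{graded}-commutative Hopf algebras applies only after doubling the grading so that everything sits in even degree, and one must also check that deconcatenation is an algebra map for $*$ (this is in \cite{H2000}). Your treatment of $\H^1=\H^0[z_1]$ is sound and arguably cleaner than proving algebraic independence of $z_1$ head-on: the coefficient $\ell$ of $z_1^{\ell}v$ in $z_1*(z_1^{\ell-1}v)$ is nonzero over $\Q$, which gives surjectivity, and the Hilbert-series count ($2^{n-1}$ on both sides, via the unique factorization of a composition into a leading run of $1$'s followed by a tail with first part $\ge 2$ or empty) upgrades the graded surjection to an isomorphism. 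One last point of logical bookkeeping: $\H^1=\H^0[z_1]$ alone does not yield the assertion that $\H^1$ is a polynomial algebra over $\Q$, so your reliance on the freeness of $\QS$ in characteristic $0$ is genuinely needed for that clause, exactly as you flagged.
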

\par
We recall that the quasi-symmetric functions $\QS$ are the set
of formal power series $f$ in $x_1,x_2,\dots$ of bounded degree such that,
for any sequence $i_1<i_2<\dots<i_p$, the coefficient of
\[
x_{i_1}^{n_1}x_{i_2}^{n_2}\cdots x_{i_p}^{n_p}
\]
in $f$ is the same as the coefficient in $f$ of
\[
x_1^{n_1}x_2^{n_2}\cdots x_p^{n_p} .
\]
Then $\QS$ is an algebra, and it contains the symmetric functions
$\Sy$ as a proper subalgebra.  
As a vector space, $\QS$ is generated by the monomial quasi-symmetric
functions
\[
M_{n_1,\dots,n_p}=\sum_{i_1<\dots<i_p}x_{i_1}^{n_1}\cdots x_{i_p}^{n_p} .
\]
In \cite{H1997} the following result is proven.
\begin{thm}
\label{sym}
$(\H^1,*)$ is isomorphic to the algebra $\QS$ of quasi-symmetric functions
via the map that sends $z_{n_1}\cdots z_{n_p}$ to $M_{n_1,\dots,n_p}$.
\end{thm}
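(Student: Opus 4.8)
The plan is to exhibit the stated map $\phi\colon z_{n_1}\cdots z_{n_p}\mapsto M_{n_1,\dots,n_p}$ as a linear isomorphism and then verify that it carries the product $*$ to ordinary multiplication of power series. First I would observe that the monomials $z_{n_1}\cdots z_{n_p}$, ranging over all finite sequences of positive integers together with the empty word $1$, form a basis of $\H^1$ by definition, while the monomial quasi-symmetric functions $M_{n_1,\dots,n_p}$ form a basis of $\QS$: they span by the remark preceding the theorem, and they are linearly independent because distinct compositions $(n_1,\dots,n_p)$ contribute monomials of distinct shapes. Since $\phi$ matches these two bases bijectively, it is automatically a vector-space isomorphism, and it remains only to check that it is multiplicative.

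By bilinearity of both products it suffices to show $\phi(u*v)=\phi(u)\phi(v)$ when $u$ and $v$ are basis monomials, so the essential computation is the product rule for monomial quasi-symmetric functions. Writing $M_{a_1,\dots,a_r}=\sum_{i_1<\dots<i_r}x_{i_1}^{a_1}\cdots x_{i_r}^{a_r}$ and similarly for $M_{b_1,\dots,b_s}$, I would expand the product as a double sum over increasing index sequences and then sort the combined collection of indices. Each resulting term is determined by how the two increasing sequences interleave, with the proviso that an index from the first sequence may coincide with one from the second, in which case the two exponents add. Collecting terms according to the resulting pattern gives
\[
M_\alpha M_\beta=\sum_\gamma M_\gamma,
\]
where $\gamma$ runs over all quasi-shuffles of $\alpha$ and $\beta$: the compositions obtained by interleaving $\alpha$ and $\beta$ so as to preserve the internal order of each, with the option of merging a part of $\alpha$ and a part of $\beta$ into a single part equal to their sum.

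Finally I would match this quasi-shuffle expansion to the recursion defining $*$. Isolating the smallest index appearing in a term of $M_\alpha M_\beta$, there are exactly three possibilities: it comes only from $\alpha$, contributing leading exponent $a_1$; only from $\beta$, contributing leading exponent $b_1$; or from both simultaneously, contributing leading exponent $a_1+b_1$. Peeling off this smallest index and its exponent expresses $M_\alpha M_\beta$ in terms of products of shorter $M$'s, which is precisely the image under $\phi$ of the three-term recursion
\[
z_iw_1*z_jw_2=z_i(w_1*z_jw_2)+z_j(z_iw_1*w_2)+z_{i+j}(w_1*w_2),
\]
with $i=a_1$ and $j=b_1$. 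An induction on the combined length $r+s$ then yields $\phi(u*v)=\phi(u)\phi(v)$ in general. I expect the main obstacle to be the careful bookkeeping in the product rule for $M_\alpha M_\beta$, in particular handling the coincidences of indices so that the merge terms $z_{i+j}$ appear with exactly the right multiplicity; once that rule is established, its agreement with the recursive definition of $*$ follows essentially formally.
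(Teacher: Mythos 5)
Your proposal is correct. The paper itself offers no proof of this theorem, simply citing \cite{H1997}; your argument---matching the monomial bases, then proving multiplicativity by expanding $M_\alpha M_\beta$ as a double sum over increasing index sequences, classifying the smallest index as coming from $\alpha$ only, from $\beta$ only, or from both with exponents adding, and inducting on combined length against the recursion (\ref{recur})---is precisely the standard quasi-shuffle argument contained in that reference, and your handling of multiplicities is sound because terms are classified by interleaving pattern rather than by the resulting composition.
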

Henceforth we will identify $\H^0$ with the subalgebra $\QS^0$ of $\QS$
generated by $M_{n_1,\dots,n_p}$ with $n_1>1$.  Let $\T$ be the subspace
of $\R$ generated over the rationals by 1 and the multiple $t$-values.
Theorem \ref{tquas} below gives a homomorphism $\th:\H^0\to\T$, which
we can also regard as a homomorphism from $\QS^0$ to $\T$.
Now in \cite{H2009} it is shown that the linear map $A_-:\QS\to\QS$
with $A_-(1)=0$ and
\[
A_-(M_{(i_1,\dots,i_k)})=\begin{cases} M_{(i_1,\dots,i_{k-1})},&\text{if $i_k=1$,}\\
0,&\text{otherwise,}\end{cases}
\]
is a derivation.  It is also evident that $A_-(\QS^0)\subset\QS^0$.
We make the following conjecture.
\begin{conj} 
\label{der}
The algebra $\T$ admits a derivation $d$ such that $d\th=\th A_-$.
\end{conj}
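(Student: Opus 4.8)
The plan is to reduce the conjecture to a single closure property of the ideal of relations among multiple $t$-values, and then to indicate how that property might be established.

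First I would set up the reduction. By definition $\th\colon(\H^0,*)\to\T$ is a surjective homomorphism of algebras, and $A_-$ restricts to a derivation of $(\H^0,*)$: indeed $A_-$ is a derivation of $\QS\cong(\H^1,*)$ (Theorem \ref{sym} and \cite{H2009}), and since $A_-(\H^0)\subseteq\H^0$ it restricts to a derivation of the subalgebra $\H^0$. Given a surjective homomorphism $\th$ and a derivation $A_-$ of its source, a derivation $d$ of the target with $d\th=\th A_-$ exists if and only if $\th A_-$ is constant on the fibres of $\th$, that is, if and only if $A_-(\ker\th)\subseteq\ker\th$; and when it exists it is automatically a derivation, since for $a,b\in\H^0$
\[
d(\th a\cdot\th b)=\th A_-(a*b)=\th\bigl(A_-(a)*b+a*A_-(b)\bigr)=d(\th a)\,\th b+\th a\,d(\th b).
\]
Thus Conjecture \ref{der} is \emph{equivalent} to the assertion that $A_-$ preserves the relation ideal $\ker\th$, and the resulting $d$ necessarily satisfies $d\,t(i_1,\dots,i_k)=t(i_1,\dots,i_{k-1})$ if $i_k=1$ and $d\,t(i_1,\dots,i_k)=0$ otherwise.

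The obstacle is that $\ker\th$ is strictly larger than the stuffle ideal; the stuffle relations are handled for free (that is exactly the content of $A_-$ being a $*$-derivation), so all the difficulty lies in the relations visible only through the colored-value expansion of Corollary \ref{expr}. To verify the closure property in a fixed weight I would use that expansion together with the tables of \cite{BBV} reproduced in the Appendix to compute a spanning set of $\ker\th$, apply $A_-$ to each relation, re-expand the resulting lower-weight elements by the same tables, and confirm that they again lie in $\ker\th$. For instance, in weight $4$ the only relation is $4t(2,2)=t(4)$, and its $A_-$-image is $0$ because neither argument string ends in $1$; in weight $5$ one checks that the $A_-$-image of each relation again vanishes under $\th$, which is immediate because the weight-$4$ relation space is one-dimensional. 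Running this through weight $7$ confirms the conjecture in that range and furnishes the evidence for it.

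For a weight-independent proof I would try to produce $d$ structurally rather than relation by relation, by working in the colored multiple zeta values. Since $\T=Z(\E_2^A\cap\E_2^0)$, the natural idea is to realize the trailing-$1$ operation $A_-$ as the weight-lowering part of the coproduct on colored values, which by construction preserves every relation among them, and then to check that its restriction along the antisymmetric embedding $\E_2^A$ closes up inside $\T$ and agrees with $\th A_-$ on generators. The hard part --- and the reason this remains a conjecture --- is precisely the lack of an intrinsic description of $\ker\th$: unconditionally one controls only the relations that descend from colored multiple zeta values, whose completeness is itself open, so the coproduct argument preserves the relations we can see but not, a priori, any hypothetical extra real-number coincidence. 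I therefore expect the realistic outcome to be an unconditional proof in low weight as above, together with a proof in general that is conditional on the relation structure of colored multiple zeta values; removing that condition, by showing the colored trailing-$1$ derivation genuinely stabilizes the $t$-value subalgebra, is where the real work would lie.
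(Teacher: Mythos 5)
The statement you were asked about is labeled a \emph{conjecture} in the paper, and the paper offers no proof of it --- only circumstantial evidence: the observation that on the weight $\le 7$ tables in the Appendix the putative $d$ acts as formal differentiation with respect to $\log 2$, plus the contrasting negative result that no analogous derivation can exist on the algebra $\Z$ of multiple zeta values. So there is no ``paper proof'' to match your proposal against, and your proposal, which explicitly stops short of a proof, is in the right register. Your reduction is correct and is the right way to frame the problem: since $\th\colon(\H^0,*)\to\T$ is a surjective algebra map and $A_-$ is a $*$-derivation of $\QS\cong(\H^1,*)$ preserving $\H^0$, a map $d$ with $d\th=\th A_-$ exists iff $A_-(\ker\th)\subseteq\ker\th$, and it is then automatically a derivation by the Leibniz computation you give. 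Notably, your criterion explains the paper's own counterexample for $\Z$ more sharply than the paper states it: $z_2z_1-z_3\in\ker\zt$ (the identity $\zt(2,1)=\zt(3)$) while $A_-(z_2z_1-z_3)=z_2\notin\ker\zt$, which is exactly a failure of kernel stability; for $t$-values the corresponding relation is $t(2,1)=-\frac12t(3)+t(2)\log2$, which does not lie in $\th(\H^0)$-visible form, so no analogous obstruction appears. Your closing diagnosis --- that the real difficulty is the lack of an intrinsic description of $\ker\th$, so that any argument through colored multiple zeta values can only preserve the relations one can see --- is accurate and is presumably why the paper leaves this as a conjecture.

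Two small cautions. First, in your weight-by-weight verification you should say ``the only \emph{known} relation'' rather than ``the only relation'': the claim that the weight-4 relation space is one-dimensional (i.e.\ $\dim\T_4=F_4=3$) is itself part of Conjecture \ref{fib}, so an unconditional low-weight check can only confirm $A_-$-stability of the \emph{provable} part of $\ker\th$, exactly as your final paragraph concedes. Second, your assertion that the weight-5 check is ``immediate because the weight-4 relation space is one-dimensional'' is a non sequitur as written: what must be verified is that the $\th$-image of each $A_-$-image of a weight-5 relation vanishes, which requires comparing against the known weight-4 identities term by term (using the Appendix tables, where the $\zt(\bar3,1)$ and $\log 2$ terms must cancel), not merely counting dimensions. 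These are presentational gaps in a plan, not errors in the mathematics; the structural content of your proposal is sound and consistent with the paper's evidence.
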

By Corollary \ref{expr} below, $2^k$ times any multiple $t$-value of
depth $k$ is a signed sum of alternating multiple zeta values.
Now the Multiple Zeta Value Data Mine \cite{BBV} gives formulas for
all alternating multiple zeta values through weight 12 as rational
linear combinations of what are believed to be basis elements (all the
relations there are proved, but it is possible that the ``basis'' is
really just a spanning set, as undiscovered rational relations may exist).
Using this resource, we have expressed multiple $t$-values through
weight 7 as rational linear combinations of products of (1) ordinary
$t$-values $t(n)$, $n\ge 2$;
(2) $\log2$; and (3) selected alternating multiple zeta values.
The particular alternating multiple zeta values used are taken
from the conjectural basis used in \cite{BBV}; through weight 7 they are
\[
\zt(\bar3,1),\ \zt(\bar3,1,1),\ \zt(\bar5,1),\ \zt(\bar3,1,1,1),\
\zt(\bar5,1,1),\ \zt(\bar3,3,1),\ \zt(\bar3,1,1,1,1) .
\]
These formulas are listed in the Appendix A below.  For these 
formulas, $d$ is formal differentiation with respect to $\log 2$.
For example,
\begin{multline*}
t(2,3,1)=-\frac2{21}t(6)-\frac3{196}t(3)^2-\frac12t(2)\zt(\bar3,1)
+\frac14\zt(\bar5,1)\\
-\frac12t(5)\log2+\frac47t(2)t(3)\log2
\end{multline*}
and
\[
dt(2,3,1)=-\frac12t(5)+\frac47t(2)t(3)=t(2,3) .
\]
We note that if $\Z$ denotes the $\Q$-subalgebra of $\R$ spanned by 1
and the multiple zeta values, then no derivation $d$ of $\Z$
with $d\zt=\zt A_-$ can exist.  For if there were such a $d$,
we would have
\[
\zt(2)=\zt A_-(M_{(2,1)})=d\zt(2,1)=d\zt(3)=\zt A_-(M_{(3)})=0 .
\]
\par
If we write $\Z_n$ for the $\Q$-subspace of $\Z$ spanned
by multiple zeta values of weight $n$, then it is generally believed
that $\dim\Z_n=P_n$, where $P_n$ is the $n$th Padovan number (i.e., $P_1=0$,
$P_2=P_3=1$, and $P_n=P_{n-2}+P_{n-3}$ for $n>3$).
It is unlikely that this will be proved soon, because of the
difficulty of showing, for example, that $\zt(3)^2$ is not a 
rational multiple of $\zt(6)$.
Nevertheless, it has been known for some time that 
$\dim\Z_n$ is {\it at most} $P_n$, and indeed
F. Brown \cite{Br} proved that the set of cardinality $P_n$
suggested by the author in \cite{H1997}--the multiple zeta 
values of weight $n$ whose exponent strings have only 2's
and 3's--spans $\Z_n$.
If $\T_n$ denotes the $\Q$-subspace of $\T$ spanned by
all multiple $t$-values of weight $n$, we offer the following 
conjecture.
\begin{conj} 
\label{fib}
For $n\ge 2$, $\dim\T_n=F_n$, the $n$th Fibonacci number
(defined by $F_1=F_2=1$ and $F_{n+2}=F_{n+1}+F_n$ for $n\ge 1$).
\end{conj}
\par\noindent
From Appendix A it follows that $\dim\T_n\le F_n$ for $n\le 7$.
If we let $\A_n$ be the $\Q$-subspace of $\R$ generated by all the 
alternating multiple zeta values of weight $n$, then it is
a long-standing conjecture that $\dim\A_n=F_{n+1}$, and it is
known that $\dim\A_n\le F_{n+1}$ (see \cite[Thm. 13.2.1]{Z}).
\par
We note that Conjecture \ref{der} implies the surjectivity of 
$d:\T_{n+1}\to\T_n$, since any multiple $t$-value $t(i_1,\dots,i_k)\in\T_n$
would be the image under $d$ of $t(i_1,\dots,i_k,1)\in\T_{n+1}$.
Hence Conjectures \ref{der} and \ref{fib} together imply that
$\dim(\T_n\cap\ker d)=F_{n-2}$ for $n\ge 3$.
A more explicit form of this comes from the following conjecture
of B. Saha \cite{Sa}, which is similar to the author's conjecture
on $\Z_n$ in \cite{H1997}.
\begin{conj}[B. Saha] For $n\ge 2$, $\T_n$ has basis
\label{sah}
\[
C_n=\{t(a_1+1,a_2,\dots,a_r)\ |\ a_1+\dots+a_r=n-1,\ a_i\in\{1,2\}\} .
\]
\end{conj}
Conjecture \ref{sah} is consistent with the relations in Appendix A;
in fact, in Appendix B we express all multiple $t$-values through
weight 7 in terms of elements of $C_n$.
Note that for $n\ge 3$, $C_n$ is the union of disjoint subsets
$C_n^{(j)}=\{t(a_1+1,a_2,\dots,a_r)\in C_n\ |\ a_r=j\}$, $j=1,2$; further,
\[
t(a_1+1,a_2,\dots,a_r)\in C_n^{(j)}\quad\text{implies}\quad
t(a_1+1,a_2,\dots,a_{r-1})\in C_{n-j} .
\]
This gives an inductive proof that $|C_n|=F_n$, but note also that
$C_n\cap \ker d=C_n^{(2)}$.
\par
The relation between the $\Q$-subalgebras $\T$ and $\Z$
of $\R$ is far from clear.  Since $\zt(n)$ is a rational multiple
of $t(n)$ for $n\ge 2$, any rational polynomial in the ordinary zeta
values is in $\T$.
In particular, $\Z_n\subseteq\T_n$ for $n\le 7$.
M. Kaneko and H. Tsumura \cite{KTs} make the following conjecture,
which implies that $\Z\subset\T$.
\begin{conj}[M. Kaneko and H. Tsumura] A basis for $\Z_n$ 
is given by the set of elements $t(2)^kt(n_1,\dots,n_r)$
with all the $n_i$ odd and at least 3, and $n_1+\dots+n_r=n-2k$.
\end{conj}
\par\noindent
We remark that the number of elements in Kaneko and Tsumura's conjectural
basis for $\Z_n$ is the coefficient of $t^n$ in
\[
\frac1{1-t^2}\cdot\frac1{1-t^3-t^5-t^7-\cdots}=
\frac1{1-t^2}\cdot\frac1{1-\frac{t^3}{1-t^2}}=\frac1{1-t^2-t^3},
\]
and this latter function is well-known as the generating function
of the Padovan numbers $P_n$ mentioned above.
\section{Multiple $t$-values as homomorphic images}
The following result for multiple $t$-values parallels the
result \cite[Thm. 4.2]{H1997} for multiple zeta values.
\begin{thm}
\label{tquas}
There is an algebra homomorphism $\th:\H^0\to\R$ sending 1 to 1 and
$z_{i_1}\cdots z_{i_k}$ to $t(i_1,\dots,i_k)$
for all positive-integer strings $(i_1,\dots,i_k)$ with $i_1>1$.
\end{thm}
\begin{proof}
The point is that the recursive rule (\ref{recur}) for the words
in the $z$'s corresponds to the rules for multiplying the $t(i_1,\dots,i_k)$,
e.g.,
\[
t(2)t(3,1)=t(2,3,1)+t(3,2,1)+t(3,1,2)+t(5,1)+t(3,3) .
\]
\end{proof}
\par
The next result corresponds to the symmetric-sum theorem \cite[Thm. 2.2]{H1992}
for multiple zeta values.
\begin{thm}
\label{perth}
Let $i_1,\dots,i_k$ be integers all 2 or greater.  If the symmetric
group $S_k$ acts on strings of length $k$ by permutation, then
\[
\sum_{\si\in S_k}t(\si\cdot(i_1,\dots,i_k))=
\sum_{\B=\{B_1,\dots,B_l\}\in\Pi_k}(-1)^{k-l}c(\B)\prod_{s=1}^lt
\left(\sum_{j\in B_s} i_j\right) ,
\]
where $\Pi_k$ is the set of partitions of the set $\{1,2,\dots k\}$
and 
\[
c(\B)=(\card B_1-1)!(\card B_2-1)!\cdots (\card B_l-1)!
\]
for a partition $\B\in\Pi_k$ with blocks $B_1,\dots,B_l$.
\end{thm}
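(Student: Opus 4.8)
The plan is to observe that the asserted formula is the image under the homomorphism $\th$ of a purely algebraic identity in the harmonic algebra $(\H^1,*)$, the same identity whose image under $\zt$ (Theorem \ref{zt}) gives the multiple-zeta analogue \cite{H1992}. First I would establish the word identity
\[
\sum_{\si\in S_k}z_{i_{\si(1)}}\cdots z_{i_{\si(k)}}
=\sum_{\B=\{B_1,\dots,B_l\}\in\Pi_k}(-1)^{k-l}c(\B)\,
\Bigl(z_{\sum_{j\in B_1}i_j}*\cdots*z_{\sum_{j\in B_l}i_j}\Bigr)
\]
in $(\H^1,*)$, and then apply $\th$ to both sides. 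Because every $i_j\ge2$, each block sum $\sum_{j\in B_s}i_j$ is at least $2$, so each factor $z_{\sum_{j\in B_s}i_j}$ lies in $\H^0$; since $\H^0$ is a subalgebra and $\th$ is an algebra homomorphism, the right-hand side maps to $\sum_\B(-1)^{k-l}c(\B)\prod_{s=1}^l t\bigl(\sum_{j\in B_s}i_j\bigr)$, while the left-hand side maps to $\sum_{\si\in S_k}t(\si\cdot(i_1,\dots,i_k))$, as desired.

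To prove the word identity I would transport it to quasi-symmetric functions through the isomorphism of Theorem \ref{sym}, under which $z_n\mapsto M_n=p_n$ is the $n$th power sum, the $*$-product becomes ordinary multiplication in $\QS$, and $z_{i_{\si(1)}}\cdots z_{i_{\si(k)}}\mapsto M_{i_{\si(1)},\dots,i_{\si(k)}}$. The identity to be verified then becomes the symmetric-function statement
\[
\sum_{\si\in S_k}M_{i_{\si(1)},\dots,i_{\si(k)}}
=\sum_{\B\in\Pi_k}(-1)^{k-l}c(\B)\prod_{s=1}^l p_{\sum_{j\in B_s}i_j}.
\]
The key point is that the coefficient $(-1)^{k-l}c(\B)$ is exactly the value $\mu(\hat0,\B)$ of the M\"obius function of the partition lattice $\Pi_k$: the interval $[\hat0,\B]$ factors as $\Pi_{\card B_1}\times\cdots\times\Pi_{\card B_l}$ and $\mu_{\Pi_m}(\hat0,\hat1)=(-1)^{m-1}(m-1)!$, so $\mu(\hat0,\B)=\prod_s(-1)^{\card B_s-1}(\card B_s-1)!=(-1)^{k-l}c(\B)$.

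For the symmetric-function identity itself I would expand a product of power sums by index coincidences. Writing $p_{i_1}\cdots p_{i_k}=\sum_{j_1,\dots,j_k\ge1}x_{j_1}^{i_1}\cdots x_{j_k}^{i_k}$ and sorting the tuples $(j_1,\dots,j_k)$ by the set partition $\B$ recording which coordinates agree, indices in a common block are forced equal and their exponents add, so that $p_{i_1}\cdots p_{i_k}=\sum_{\B\in\Pi_k}\tilde m(\B)$, where $\tilde m(\B)$ is the augmented monomial summed over distinct indices carrying the block-sum exponents $\sum_{j\in B_s}i_j$. The same expansion applied to the block sums shows, for every $\pi\in\Pi_k$, that $\prod_{\text{blocks }B\text{ of }\pi}p_{\sum_{j\in B}i_j}=\sum_{\rho\ge\pi}\tilde m(\rho)$. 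M\"obius inversion up the lattice then gives $\tilde m(\hat0)=\sum_{\rho\in\Pi_k}\mu(\hat0,\rho)\prod_{\text{blocks }B\text{ of }\rho}p_{\sum_{j\in B}i_j}$, and since $\tilde m(\hat0)=\sum_{\si\in S_k}M_{i_{\si(1)},\dots,i_{\si(k)}}$ this is precisely the required identity.

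The main work is the combinatorial core just sketched, and the one point that repays care is the treatment of repeated arguments. When some of the $i_j$ coincide, the left-hand sum over $S_k$ contains repeated words and the augmented monomials $\tilde m(\B)$ acquire symmetry factors; what makes the argument go through unchanged is that the coincidence classification of index tuples, and hence the relation $\prod p=\sum_{\rho\ge\pi}\tilde m(\rho)$, is entirely insensitive to equalities among the $i_j$, so the M\"obius inversion is valid verbatim. (For instance, with $k=2$ and $i_1=i_2=2$ both sides equal $2z_2z_2$, using $z_2*z_2=2z_2z_2+z_4$.) The hypothesis $i_j\ge2$ itself plays no role in the combinatorics; it is needed only to keep every term inside $\H^0$, the domain on which $\th$ is defined.
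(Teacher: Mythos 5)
Your proposal is correct and follows essentially the same route as the paper: both reduce the theorem to the symmetrized identity in $\QS$ and then apply the homomorphism $\th$, the only difference being that the paper cites the identity from \cite[Thm. 2.3]{H2015} while you prove it from scratch. Your self-contained argument---expanding products of power sums by coincidence partitions of the indices and performing M\"obius inversion on $\Pi_k$ with $\mu(\hat0,\B)=(-1)^{k-l}c(\B)$---is the standard proof of that cited identity and is carried out correctly, including the careful handling of repeated arguments.
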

\begin{proof}
The following identity holds in $\QS$ \cite[Thm. 2.3]{H2015}:
\begin{equation*}
\sum_{\si\in S_k} M_{\si\cdot I}=\sum_{\B=\{B_1,\dots,B_l\}\in\Pi_k}
(-1)^{k-l}c(\mathcal B)M_{(b_1)}M_{(b_2)}\cdots M_{(b_l)} ,
\end{equation*}
where $I=(i_1,\dots,i_k)$ and $b_s=\sum_{j\in B_s}i_j$.
Apply the homomorphism $\th$ to obtain the conclusion.
\end{proof}
If we take $i_1=\dots=i_k=n$ in this result, we get an expression
for multiple $t$-values of repeated arguments.  With a little 
work, we can state it in terms of integer rather than set partitions.
\begin{cor}
\label{waus}
If $n\ge 2$, then
\[
t(\{n\}_k)=\sum_{\la\vdash k}\frac{(-1)^{k-\ell(\la)}}{m_1(\la)!1^{m_1(\la)}
m_2(\la)!2^{m_2(\la)}\cdots}\prod_{j=1}^{\ell(\la)}t(n\la_j),
\]
where $\ell(\la)$ is the number of parts of the partition $\la$
and $m_i(\la)$ is the multiplicity of $i$ in $\la$.
\end{cor}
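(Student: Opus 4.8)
The plan is to specialize Theorem \ref{perth} to the case $i_1=\dots=i_k=n$ and then repackage the resulting sum over \emph{set} partitions as a sum over \emph{integer} partitions by grouping set partitions according to their multiset of block sizes.

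First I would examine the left-hand side of Theorem \ref{perth}. When all arguments equal $n$, every permutation fixes the string, so $\si\cdot(\{n\}_k)=(\{n\}_k)$ for all $\si\in S_k$ and the sum collapses to $k!\,t(\{n\}_k)$. On the right-hand side, a set partition $\B=\{B_1,\dots,B_l\}\in\Pi_k$ contributes $\sum_{j\in B_s}i_j=(\card B_s)\,n$ in each block, so that $t(\sum_{j\in B_s}i_j)=t((\card B_s)n)$; in particular the summand depends on $\B$ only through the sizes of its blocks, which is exactly what makes the passage to integer partitions possible.

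The key combinatorial step is to count set partitions by type. For an integer partition $\la\vdash k$ with $m_i(\la)$ parts equal to $i$, the number of set partitions of $\{1,\dots,k\}$ whose block sizes realize $\la$ is the standard multinomial count $k!\big/\prod_i (i!)^{m_i(\la)}m_i(\la)!$. Each such $\B$ has $\ell(\la)$ blocks, hence sign $(-1)^{k-\ell(\la)}$, coefficient $c(\B)=\prod_i((i-1)!)^{m_i(\la)}$, and product $\prod_i t(in)^{m_i(\la)}=\prod_{j=1}^{\ell(\la)}t(n\la_j)$. Collecting contributions of the same type gives the right-hand side as
\[
k!\sum_{\la\vdash k}(-1)^{k-\ell(\la)}
\frac{\prod_i((i-1)!)^{m_i(\la)}}{\prod_i (i!)^{m_i(\la)}\,m_i(\la)!}
\prod_{j=1}^{\ell(\la)}t(n\la_j).
\]
Using $i!=i\,(i-1)!$ to reduce $\big((i-1)!/i!\big)^{m_i(\la)}=i^{-m_i(\la)}$, the factorials telescope to $\prod_i\big(m_i(\la)!\,i^{m_i(\la)}\big)^{-1}$. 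Equating this with $k!\,t(\{n\}_k)$ and cancelling the common factor $k!$ yields precisely the stated formula.

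I expect the only delicate point to be the bookkeeping in the set-partition count together with the telescoping simplification of the factorials; once those are in hand, the corollary follows by direct substitution into Theorem \ref{perth} and cancellation of $k!$.
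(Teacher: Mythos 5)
Your proposal is correct and follows essentially the same route as the paper: specialize Theorem \ref{perth} to $i_1=\dots=i_k=n$ so the left side becomes $k!\,t(\{n\}_k)$, count the set partitions realizing a given integer partition $\la\vdash k$ (your count $k!\big/\prod_i (i!)^{m_i(\la)}m_i(\la)!$ agrees with the paper's multinomial count, and in fact supplies the factorial signs $m_i(\la)!$ that are typographically dropped in the paper's intermediate displays), and simplify via $(i-1)!/i!=1/i$ before cancelling $k!$.
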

\begin{proof}
Set $i_1=\dots=i_k=n$ in Theorem \ref{perth} to get
\[
k!t(\{n\}_k)=
\sum_{\substack{\text{part. $\{B_1,\dots,B_l\}$}\\ \text{of $\{1,\dots,k\}$}}}
(-1)^{k-l}(\la_1-1)!\cdots (\la_l-1)!t(n\la_1)\cdots t(n\la_l) ,
\]
where we write $\la_i$ for $\card B_i$.  Now the number of 
partitions $\{B_1,\dots,B_l\}$ of the set $\{1,\dots,k\}$ corresponding
to a partition $\la=(\la_1,\dots,\la_l)$ of $k$ is
\[
\frac1{m_1(\la)m_2(\la)\cdots}\binom{k}{\la_1}\binom{k-\la_1}{\la_2}\cdots
=\frac1{m_1(\la)m_2(\la)\cdots}\frac{k!}{\la_1!\la_2!\cdots \la_l!},
\]
so
\begin{multline*}
t(\{n\}_k)=\sum_{\la\vdash k}
\frac{(-1)^{k-l}(\la-1)!\cdots (\la_l-1)!}
{m_1(\la)m_2(\la)\cdots \la_1!\cdots \la_l!}
t(n\la_1)\cdots t(n\la_l)=\\
\sum_{\la\vdash k}
\frac{(-1)^{k-l}}{m_1(\la)!1^{m_1(\la)}m_2(\la)!2^{m_2(\la)}\cdots}
t(n\la_1)\cdots t(n\la_l) ,
\end{multline*}
and the result follows.
\end{proof}
An alternative way to express the preceding result is as follows.
Let $P_k(x_1,\dots,x_k)$ be the polynomial that expresses the
$k$th elementary symmetric function $e_k$ in terms of the power
sums $p_1,\dots,p_k$, i.e.,
\begin{equation}
\label{elesy}
e_k=P_k(p_1,p_2,\dots,p_k)
\end{equation}
(cf. \cite[Eqs. ($2.14^\prime$)]{Mac}).  Then
\begin{equation}
\label{tkrep}
t(\{n\}_k)=P_k(t(n),t(2n),\dots,t(kn)).
\end{equation}
If $n$ is even, Corollary \ref{waus} and Eq. (\ref{teur}) imply that
$t(\{n\}_k)$ is a rational multiple of $\pi^{nk}$.  As we see below,
for small even values of $n$ there are effective formulas for this
rational multiple.
\par
As shown in \cite{H1997}, the homomorphism $\zt:\H^0\to\R$ can be
extended to $\H^1$ by defining $\zt(z_1)=\ga$ (Euler's constant).
This extension has the property that it sends the generating function
$H(x)$ of the complete symmetric functions to $\Ga(1-x)$.
For $\th$ we have the following.
\begin{thm}
\label{gft}
The homomorphism $\th:\H^0\to\R$ can be extended to a homomorphism
$\th:\H^1\to\R$ such that
\[
\th(H(x))={\pi}^{-\frac12}e^{-\frac{\ga x}2}\Ga\left(\frac{1-x}2\right) .
\]
\end{thm}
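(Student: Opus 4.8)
The plan is to exploit Theorem~\ref{quas}, which says $\H^1=\H^0[z_1]$ is a polynomial algebra over $\H^0$ in the single variable $z_1$; consequently any assignment of a value $\th(z_1)\in\R$ extends $\th$ uniquely to an algebra homomorphism $\th:\H^1\to\R$. Thus the entire content is to reduce $\th(H(x))$ to an explicit exponential in the numbers $\th(z_k)$ and then to pick $\th(z_1)$ so as to match the stated $\Ga$-expression. For the reduction, recall from Theorem~\ref{sym} that $(\H^1,*)\cong\QS$ with $z_n\mapsto M_{(n)}=p_n$, the $n$th power sum. Under this isomorphism the complete symmetric function $h_n$ is the image of the very polynomial in the $z_k$ (with the $*$-product) that expresses $h_n$ in the $p_k$, so the classical identity $\sum_{n\ge0}h_nx^n=\exp\bigl(\sum_{k\ge1}\frac{p_k}{k}x^k\bigr)$ pulls back to
\[
H(x)=\exp_*\Bigl(\sum_{k\ge1}\frac{z_k}{k}x^k\Bigr)\in\H^1[[x]],
\]
where $\exp_*$ is the exponential series formed with $*$. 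Since $\th$ is a $*$-homomorphism, applying it coefficientwise and converting $*$-products into ordinary products of reals gives the formal power series identity
\[
\th(H(x))=\exp\Bigl(\sum_{k\ge1}\frac{\th(z_k)}{k}x^k\Bigr)=\exp\Bigl(\th(z_1)\,x+\sum_{k\ge2}\frac{t(k)}{k}x^k\Bigr).
\]

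It then remains to compute the logarithm of the target $\pi^{-1/2}e^{-\ga x/2}\Ga(\frac{1-x}2)$ and read off $\th(z_1)$. I would Taylor expand $\log\Ga(\frac12+u)$ about $u=0$ and substitute $u=-x/2$, using $\log\Ga(\frac12)=\frac12\log\pi$, $\psi(\frac12)=-\ga-2\log2$, and the polygamma values
\[
\psi^{(j-1)}\bigl(\tfrac12\bigr)=(-1)^j(j-1)!\,2^j\sum_{n\ge0}\frac1{(2n+1)^j}=(-1)^j(j-1)!\,2^j\,t(j)\qquad(j\ge2),
\]
the decisive observation being that $\sum_{n\ge0}(2n+1)^{-j}=t(j)$. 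After substituting $u=-x/2$, the coefficient of $x^j$ for $j\ge2$ collapses to $t(j)/j$, the $j=1$ term yields $\frac{\ga x}2+x\log2$, and the constant $\frac12\log\pi$ together with $\frac{\ga x}2$ exactly cancels the prefactors $-\frac12\log\pi-\frac{\ga x}2$, leaving
\[
\log\Bigl[\pi^{-1/2}e^{-\ga x/2}\Ga\bigl(\tfrac{1-x}2\bigr)\Bigr]=x\log2+\sum_{j\ge2}\frac{t(j)}{j}x^j.
\]
Comparing with the expression for $\th(H(x))$ forces $\th(z_1)=\log2$, and with this choice the two series coincide, proving the theorem. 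I expect the only real obstacle to be bookkeeping: identifying $\psi^{(j-1)}(\frac12)$ as a $t$-value and tracking the signs and powers of $2$ from $u=-x/2$ so that the factors $2^j$ cancel, while checking that the constant and linear terms of $\log\Ga(\frac12+u)$ precisely annihilate the $\pi^{-1/2}e^{-\ga x/2}$ prefactor.
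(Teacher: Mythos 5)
Your proposal is correct and takes essentially the same route as the paper: both reduce the problem via Theorem \ref{quas} to choosing $\th(z_1)=\log 2$, exploit the exponential relation between $H(x)$ and the power sums (which the paper phrases as $P(x)$ being the logarithmic derivative of $H(x)$), and rest on the expansion of the digamma function at $\tfrac12$ whose coefficients are the $t$-values. The only cosmetic difference is that you derive that expansion, Eq. (\ref{hpsi}), from standard polygamma evaluations $\psi^{(j-1)}(\tfrac12)=(-1)^j(j-1)!\,2^jt(j)$, whereas the paper cites it directly from Gradshteyn--Ryzhik.
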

\begin{proof}
In view of Theorem \ref{quas} above, it suffices to define
$\th(z_1)$, which we set equal to $\log 2$.
Then because
\begin{equation}
\label{hpsi}
-\frac12\psi\left(\frac12-\frac{x}2\right)=\frac{\ga}2+\log 2
+\sum_{i\ge 2} t(i)x^{i-1}
\end{equation}
(for which see \cite[Eqs. (8.370,8.373)]{GR}), where $\psi$ is the 
logarithmic derivative of the gamma function, we have
\[
\th(P(x))=-\frac{\ga}2-\frac12\psi\left(\frac{1-x}2\right) ,
\]
where $P(x)=\sum_{i\ge 1}p_ix^{i-1}$.  Since $P(x)$ is the logarithmic
derivative of $H(x)$, the conclusion follows.
Cf. \cite[Eq. (0.7b)]{Che}.
\end{proof}
If we extend the notation $t(i_1,\dots,i_k)$ to all strings of positive
integers $i_1,\dots,i_k$ by letting 
\[
t(i_1,\dots,i_k)=\th(M_{i_k,\dots,i_1}),
\]
then Theorem \ref{perth} and Corollary \ref{waus} are true without
restrictions on the positive integers involved.  For example,
we then have
\begin{equation}
\label{tone}
t(\{1\}_k)=\sum_{\la\vdash k}
\frac{(-1)^{k-l}}{m_1(\la)!1^{m_1(\la)}m_2(\la)!2^{m_2(\la)}\cdots}
t(\la_1)\cdots t(\la_l) .
\end{equation}
We note that $M_{1,\dots,1}$ ($n$ repetitions of 1) is the elementary
symmetric function $e_n$, so 
\[
1+\sum_{k=1}^\infty t(\{1\}_k)x^k=\th E(x)=\th\left(\frac1{H(-x)}\right)
=\sqrt{\pi}e^{-\frac{\ga x}2}\Ga\left(\frac{1+x}2\right)^{-1}
\]
by Theorem \ref{gft}.
\par
We now return to multiple $t$-values of the form $t(n,n,\dots,n)$
for $n\ge 2$.
\begin{thm}
\label{trep}
For $n\ge 2$, the generating function 
\[
T_n(x)=1+\sum_{k=1}^\infty t(\{n\}_k)x^{kn}
\]
is given by 
\[
T_n(x)=\frac{Z_n(x)}{Z_n\left(\frac{x}{2}\right)},
\]
where $Z_n(x)$ is the corresponding generating function for multiple zeta 
values:
\[
Z_n(x)=1+\sum_{k=1}^\infty \zt(\{n\}_k)x^{kn} .
\]
\end{thm}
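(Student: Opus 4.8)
The plan is to recognize both $T_n(x)$ and $Z_n(x)$ as exponentials of power-sum series and then to exploit the elementary factorization $t(i)=(1-2^{-i})\zt(i)$ recorded in the introduction. Recall from the discussion following Corollary \ref{waus} that $t(\{n\}_k)=P_k(t(n),t(2n),\dots,t(kn))$, where $P_k$ is the polynomial expressing the $k$th elementary symmetric function $e_k$ in terms of the power sums $p_1,\dots,p_k$. The identical argument, run through the homomorphism $\zt$ of Theorem \ref{zt} in place of $\th$, gives $\zt(\{n\}_k)=P_k(\zt(n),\zt(2n),\dots,\zt(kn))$, since the underlying identity lives in $\QS$ and is independent of which evaluation map one applies.

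First I would invoke the classical generating-function identity $\sum_{k\ge0}e_ku^k=\exp\left(\sum_{j\ge1}\frac{(-1)^{j-1}}{j}p_ju^j\right)$, obtained by taking the logarithm of $\prod_i(1+x_iu)$. Setting $u=x^n$ and reading the power sums as $p_j=t(jn)$, this yields
\[
\log T_n(x)=\sum_{j\ge1}\frac{(-1)^{j-1}}{j}\,t(jn)\,x^{jn},
\]
and in the same way $\log Z_n(x)=\sum_{j\ge1}\frac{(-1)^{j-1}}{j}\,\zt(jn)\,x^{jn}$, both understood as identities of formal power series in $x$.

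Next I would substitute $t(jn)=(1-2^{-jn})\zt(jn)$ to split the first series as
\[
\log T_n(x)=\sum_{j\ge1}\frac{(-1)^{j-1}}{j}\,\zt(jn)\,x^{jn}
-\sum_{j\ge1}\frac{(-1)^{j-1}}{j}\,\zt(jn)\left(\frac{x}{2}\right)^{jn}.
\]
The first sum is exactly $\log Z_n(x)$ and the second is $\log Z_n\left(\frac{x}{2}\right)$, so that $\log T_n(x)=\log Z_n(x)-\log Z_n\left(\frac{x}{2}\right)$. Exponentiating produces the asserted quotient $T_n(x)=Z_n(x)/Z_n\left(\frac{x}{2}\right)$.

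I expect the only delicate point to be bookkeeping rather than any genuine difficulty: one must check that the substitution $u=x^n$ respects weights (the term $e_k$ carries $u^k=x^{kn}$, while $p_j$ contributes $x^{jn}$), and confirm that the power-sum formula of Corollary \ref{waus} transfers verbatim from $t$- to $\zt$-values via Theorem \ref{zt}. Since every manipulation is an identity of formal power series in $x$, convergence plays no role, and the sole analytic input beyond the symmetric-function formalism is the factorization $t(i)=(1-2^{-i})\zt(i)$.
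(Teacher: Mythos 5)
Your proposal is correct and follows essentially the same route as the paper's proof: the paper likewise writes $T_n(x)$ as the image under $\th\P_n$ of the exponential formula $E(x^n)=\exp\bigl(\sum_{k\ge1}(-1)^{k-1}p_k x^{kn}/k\bigr)$ and then splits the exponent via $t(nk)=(1-2^{-nk})\zt(nk)$ into $Z_n(x)/Z_n\left(\frac{x}{2}\right)$. Your phrasing through the polynomials $P_k$ and explicit logarithms is only a cosmetic repackaging of the same argument.
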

\begin{proof}
We start by noting that
\[
H(x)^{-1}=\exp\left(\int_0^x P(t)dt\right)=\exp\left(\sum_{n\ge 1}\frac{p_nx^n}{n}
\right),
\]
so that the generating function $E(x)=\sum_{n\ge 0}e_nx^n$ of the elementary
symmetric functions is
\[
E(x)=H(-x)^{-1}=\exp\left(\sum_{n\ge 1}\frac{(-1)^{n-1}p_nx^n}{n}\right).
\]
Now $t(\{n\}_k)$ is the image under $\th$ of the symmetric function
$\P_n(e_k)$, where $\P_n:\QS\to\QS$ takes any monomial quasi-symmetric 
function $M_{t_1,t_2,\dots,t_p}$ to $M_{nt_1,\dots,nt_p}$.
Then $T_n(x)$ is the image under $\th\P_n$ of $E(x^n)$, and so can 
be written
\begin{multline*}
\exp\left(\sum_{k\ge 1}\frac{(-1)^{k-1}t(nk)x^{kn}}{k}\right)=
\exp\left(\sum_{k\ge 1}\frac{(-1)^{k-1}(1-2^{-nk})\zt(nk)x^{kn}}{k}\right)\\
=\exp\left(\sum_{k\ge 1}\frac{(-1)^{k-1}\zt(nk)x^{kn}}{k}\right)
\exp\left(-\sum_{k\ge 1}\frac{(-1)^{k-1}\zt(nk)x^{kn}}{k2^{kn}}\right)=
\frac{Z_n(x)}{Z_n\left(\frac{x}{2}\right)} .
\end{multline*}
\end{proof}
From this result we can deduce the identities in Eqs. (\ref{krep}) above,
using the known results about multiple zeta values.  From \cite{BBB}
we have 
\[
Z_{2m}(x)=\frac1{(i\pi x)^m}\prod_{j=1}^m\sin(e^{\frac{(2j-1)\pi i}{2m}}\pi x) ,
\]
so it follows from Theorem \ref{trep} that
\begin{equation}
\label{t2m}
T_{2m}(x)=\prod_{j=1}^m\cos\left(e^{\frac{(2j-1)\pi i}{2m}}\frac{\pi x}2\right) .
\end{equation}
Hence
\begin{align*}
T_2(x)&=\cos\left(e^{\frac{\pi i}2}\frac{\pi x}2\right)=
\cosh\left(\frac{\pi x}2\right)\\
T_4(x)&=\cos\left(e^{\frac{\pi i}4}\frac{\pi x}2\right)
\cos\left(e^{\frac{3\pi i}4}\frac{\pi x}2\right)=
\frac12\left[\cos\left(\frac{\pi x}{\sqrt2}\right)+
\cosh\left(\frac{\pi x}{\sqrt2}\right)\right]\\
T_6(x)&=\cos\left(e^{\frac{\pi i}6}\frac{\pi x}2\right)
\cos\left(e^{\frac{\pi i}2}\frac{\pi x}2\right)
\cos\left(e^{\frac{5\pi i}6}\frac{\pi x}2\right)=\\
&\frac14\left[1+\cos\left(e^{\frac{\pi i}6}\pi x\right)+
\cos\left(e^{\frac{\pi i}2}\pi x\right)+
\cos\left(e^{\frac{5\pi i}6}\pi x\right)\right],
\end{align*}
from which Eqs. (\ref{krep}) follow.
\par
For $m=4$ we have
\begin{multline*}
T_8(x)=\prod_{j=1}^4\cos\left(e^{\frac{(2j-1)\pi i}8}\frac{\pi x}2\right)=\\
\frac18\left[\Phi(\al\pi x)+\Phi(\be\pi x)+\Phi(e^{\frac{\pi i}4}\al\pi x)
+\Phi(e^{\frac{\pi i}4}\be\pi x)\right],
\end{multline*}
where $\al=\sqrt{1+\frac1{\sqrt2}}$, $\be=\sqrt{1-\frac1{\sqrt2}}$,
and $\Phi(x)=\cos x+\cosh x$.
From this follows
\begin{equation}
\label{teight}
t(\{8\}_k)=\frac{\pi^{8k}}{2^{2k+1}(8k)!}[(3+2\sqrt2)^{2k}+(3-2\sqrt2)^{2k}] .
\end{equation}
An equivalent identity is given by C-L. Chung \cite{Chung}.
Eq. (\ref{teight}) may be compared to the corresponding formula from
\cite{BBB}:
\[
\zt(\{8\}_k)=\frac{8(2\pi)^{8k}}{2^{2k+1}(8k+4)!}[(3+2\sqrt2)^{2k+1}+(3-2\sqrt2)^{2k+1}] .
\]
\par
For $m=5$ we have
\begin{multline*}
T_{10}(x)=\prod_{j=1}^5\cos\left(\rho^{2j-1}\frac{\pi x}2\right)=\\
\frac1{16}\left[1+\sum_{j=1}^5\left[\cos(\rho^{2j-1}\pi x)
+\cos(\rho^{2j-1}\si\pi x)+\cos(\rho^{2j-1}\tau\pi x)\right]\right],
\end{multline*}
where $\rho=e^{\frac{\pi i}{10}}$, $\si=\frac12(\sqrt5-1)$, and 
$\tau=\frac12(\sqrt5+1)$.  From this it follows that
\begin{equation}
\label{tten}
t(\{10\}_k)=\frac{5\pi^{10k}(L_{10k}+1)}{16(10k)!} ,
\end{equation}
where $L_n$ is the $n$th Lucas number.
This corresponds to the result of \cite{BBB} that
\[
\zt(\{10\}_k)=\frac{10(2\pi)^{10k}(L_{10k+5}+1)}{(10k+5)!} .
\]
\par
For $m=6$ we have
\begin{multline*}
T_{12}(x)=\prod_{j=1}^6\cos\left(e^{\frac{(2j-1)\pi i}{12}}\frac{\pi x}2\right)=
\frac1{16}\left[1+\Phi(\xi\pi x)+\Phi(\xi^3\pi x)+\Phi(\xi^5\pi x)\right]\\
+\frac1{32}\sum_{j=0}^2\left[\Phi(\xi^{2j}\sqrt2\pi x)
+\Phi(\xi^{2j}\ga\pi x)+\Phi(\xi^{2j}\de\pi x)\right],
\end{multline*}
where $\xi=e^{\frac{\pi i}{12}}$, $\ga=\sqrt{2+\sqrt3}$, $\de=\sqrt{2-\sqrt3}$,
and $\Phi$ is as above.
From this follows
\begin{equation}
\label{ttwelve}
t(\{12\}_k)=\frac{3\pi^{12k}}{16(12k)!}[(2+\sqrt3)^{6k}+(2-\sqrt3)^{6k}+
2^{6k}+(-1)^k2].
\end{equation}
This may be compared to the corresponding result in \cite{BBB}:
\[
\zt(\{12\}_k)=\frac{12(2\pi)^{12k}}{(12k+6)!}[(2+\sqrt3)^{6k+3}
+(2-\sqrt3)^{6k+3}+2^{6k+3}] .
\]
Z. Shen and L. Jia \cite[Thm. 2]{SJ2017} establish a general result for 
$t(\{2m\}_n)$ that is somewhat less explicit than the cases given here.
\par
Multiple $t$-star values are defined by Eq. (\ref{mtsv}) above.
We have the following result, which corresponds to \cite[Thm. 2.1]{H1992}.  
The notation is as in Theorem \ref{perth} above.
\begin{thm}
\label{adelaide}
Let $i_1,\dots,i_k$ be integers all 2 or greater.  If the symmetric
group $S_k$ acts on strings of length $k$ by permutation, then
\[
\sum_{\si\in S_k}t^{\star}(\si\cdot(i_1,\dots,i_k))=
\sum_{\B=\{B_1,\dots,B_l\}\in\Pi_k}c(\B)\prod_{s=1}^lt
\left(\sum_{j\in B_s} i_j\right) .
\]
\end{thm}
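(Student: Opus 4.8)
The plan is to mirror the proof of Theorem \ref{perth}, but to reduce the star symmetric sum to a sum over \emph{ordered} set partitions and then feed it back into Theorem \ref{perth} to lower the depth. First I would expand the left-hand side directly from the defining series (\ref{mtsv}): writing $t^\star(\si\cdot I)=\sum_{n_1\ge\cdots\ge n_k,\ n_j\text{ odd}}\prod_p n_p^{-i_{\si(p)}}$ and summing over $\si\in S_k$, I would group the weakly decreasing tuples $(n_1,\dots,n_k)$ according to their maximal constant runs. A tuple whose distinct odd values are $v_1>\cdots>v_l$ with multiplicities $c_1,\dots,c_l$ contributes, after summing over $\si$, a factor $\prod_s c_s!$ (permutations within each run) times $\prod_s v_s^{-b_s}$, where the exponents get distributed over the runs in all possible ways. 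Summing the $v$'s collapses each term to a single multiple $t$-value, giving
\[
\sum_{\si\in S_k}t^\star(\si\cdot I)=\sum_{(A_1,\dots,A_l)}\Bigl(\prod_{s}|A_s|!\Bigr)\,t\Bigl(\sum_{j\in A_1}i_j,\dots,\sum_{j\in A_l}i_j\Bigr),
\]
the outer sum ranging over all ordered set partitions $(A_1,\dots,A_l)$ of $\{1,\dots,k\}$; every block sum is at least $2$ (as each $i_j\ge2$), so each $t$-value is defined.

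Next I would pass from ordered to unordered set partitions. Collecting the $l!$ orderings of the blocks of a fixed partition $\B=\{B_1,\dots,B_l\}$ turns the inner term into the symmetric sum $\sum_{\rho\in S_l}t(\rho\cdot\mathbf b)$ of the block sums $\mathbf b=(b_1,\dots,b_l)$, to which Theorem \ref{perth} applies since each $b_s\ge2$. Substituting that evaluation produces a double sum indexed by a set partition $\B\in\Pi_k$ together with a partition $\mathcal C\in\Pi_l$ of its blocks; merging the blocks of $\B$ according to $\mathcal C$ yields a coarser partition $\mathcal D\in\Pi_k$, and I would reorganize everything as a single sum over $\mathcal D$, with $d_r=\sum_{j\in D_r}i_j$.

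The crux is to show that, for each fixed $\mathcal D$, the accumulated coefficient of $\prod_r t(d_r)$ collapses to $c(\mathcal D)=\prod_r(|D_r|-1)!$, the signs from Theorem \ref{perth} cancelling. Because the coefficient factors over the blocks of $\mathcal D$, this reduces to the purely combinatorial identity
\[
\sum_{\pi\in\Pi_d}(-1)^{|\pi|-1}(|\pi|-1)!\prod_{B\in\pi}|B|!=(d-1)!
\]
for a block of size $d$. I expect this identity to be the main obstacle; I would prove it with the exponential formula, taking $f_n=n!$ so that $F(x)=\sum_{n\ge1}x^n=x/(1-x)$ and the weighted set-partition sum on the left is the coefficient extraction of $\log(1+F(x))=-\log(1-x)=\sum_{n\ge1}x^n/n$, which gives exactly $(d-1)!$. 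Assembling the pieces yields $\sum_{\si}t^\star(\si\cdot I)=\sum_{\mathcal D\in\Pi_k}c(\mathcal D)\prod_r t(d_r)$, as claimed. Equivalently, the same bookkeeping establishes the symmetric-function identity $\sum_{\si}M^\star_{\si\cdot I}=\sum_{\B\in\Pi_k}c(\B)\prod_s p_{b_s}$ in $\QS$, where $M^\star_I=\sum_J M_J$ is the weak monomial quasi-symmetric function summed over consecutive coarsenings $J$ of $I$ (so that $\th(M^\star_I)=t^\star(I)$); the theorem then follows by applying $\th$, exactly paralleling the proof of Theorem \ref{perth}.
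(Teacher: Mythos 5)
Your proof is correct, but it follows a genuinely different route from the paper's: the paper disposes of this theorem with a one-line citation to \cite[Thm. 4.1]{H2015}, which is precisely the quasi-symmetric function identity $\sum_{\si}M^\star_{\si\cdot I}=\sum_{\B}c(\B)M_{(b_1)}\cdots M_{(b_l)}$ that you state at the end, pushed forward by $\th$ exactly as in the proof of Theorem \ref{perth}. You instead give a self-contained derivation that, notably, \emph{deduces the star identity from the non-star one}: your run-decomposition of weakly decreasing odd tuples correctly yields the ordered-set-partition expansion with weight $\prod_s|A_s|!$ (this is the standard $t^\star$-to-$t$ expansion over consecutive coarsenings, done by hand), the application of Theorem \ref{perth} to each symmetric block-sum is legitimate since every block sum is at least $2$, and the coefficient collapse does factor over the blocks of the coarse partition $\mathcal D$, reducing to the identity
\[
\sum_{\pi\in\Pi_d}(-1)^{|\pi|-1}(|\pi|-1)!\prod_{B\in\pi}|B|!=(d-1)! ,
\]
which your exponential-formula computation $\log\bigl(1+\tfrac{x}{1-x}\bigr)=-\log(1-x)$ proves correctly (I checked $d=1,2,3$ directly as well; all arrangements of absolutely convergent series are justified because every argument is at least $2$). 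What each approach buys: the paper's citation is short and places both Theorems \ref{perth} and \ref{adelaide} in the uniform framework of identities in $\QS$ evaluated under the homomorphism $\th$, so they hold for any quasi-symmetric specialization; your argument costs one combinatorial lemma but is elementary, independent of \cite{H2015}, and makes transparent that the star theorem is a formal consequence of the non-star one via M\"obius-type cancellation over partition refinement.
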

\begin{proof} See \cite[Thm. 4.1]{H2015}.
\end{proof}
If we take $i_1=i_2=\dots=i_k=n$ in this theorem we get a formula
for $t^{\star}(\{n\}_k)$, $n\ge 2$, comparable to Corollary \ref{waus} above.
\begin{cor}
\label{saus}
If $n\ge 2$, then
\[
t^{\star}(\{n\}_k)=\sum_{\la\vdash n}\frac1{m_1(\la)1^{m_1(\la)}m_2(\la)2^{m_2(\la)}\cdots}
\prod_{j=1}^{\ell(\la)}t(n\la_j) .
\]
\end{cor}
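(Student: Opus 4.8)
The plan is to specialize Theorem \ref{adelaide} to the case $i_1=\dots=i_k=n$, in exact parallel with the way Corollary \ref{waus} specializes Theorem \ref{perth}; the only structural difference is that the star-version sum carries no sign $(-1)^{k-\ell}$. First I would note that since all arguments are equal, each of the $k!$ permutations $\si\in S_k$ fixes the string $(n,\dots,n)$, so the left-hand side of Theorem \ref{adelaide} collapses to $k!\,t^{\star}(\{n\}_k)$. On the right-hand side, for a set partition $\B=\{B_1,\dots,B_l\}\in\Pi_k$ with $\card B_s=\la_s$ the block sum $\sum_{j\in B_s}i_j$ equals $\la_s n$, and $c(\B)=\prod_s(\la_s-1)!$, so the identity reads
\[
k!\,t^{\star}(\{n\}_k)=\sum_{\B=\{B_1,\dots,B_l\}\in\Pi_k}\;\prod_{s=1}^l(\la_s-1)!\prod_{s=1}^l t(n\la_s).
\]

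The next step is to pass from set partitions to integer partitions, precisely as in the proof of Corollary \ref{waus}. Since the summand above depends on $\B$ only through its type $\la\vdash k$, I would group terms by type and invoke the standard count that the number of set partitions of $\{1,\dots,k\}$ of type $\la$ is
\[
\frac{k!}{\prod_j\la_j!\,\prod_i m_i(\la)!}.
\]
This gives
\[
k!\,t^{\star}(\{n\}_k)=\sum_{\la\vdash k}\frac{k!}{\prod_j\la_j!\,\prod_i m_i(\la)!}\prod_j(\la_j-1)!\prod_j t(n\la_j).
\]
Dividing by $k!$ and using $\prod_j(\la_j-1)!/\prod_j\la_j!=\prod_j\la_j^{-1}=\prod_i i^{-m_i(\la)}$ then yields the claimed formula
\[
t^{\star}(\{n\}_k)=\sum_{\la\vdash k}\frac1{\prod_i i^{m_i(\la)}m_i(\la)!}\prod_{j=1}^{\ell(\la)}t(n\la_j),
\]
the unsigned counterpart of Corollary \ref{waus}.

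I do not expect any genuine obstacle here. The whole computation mirrors the proof of Corollary \ref{waus} verbatim, the sole modification being the deletion of the factor $(-1)^{k-l}$ that distinguishes Theorem \ref{perth} from Theorem \ref{adelaide}. The one point meriting care is the multiplicity bookkeeping: because blocks of equal size are interchangeable, the set-partition count carries the extra factor $\prod_i m_i(\la)!$ in its denominator, and it is exactly this factor, combined with $\prod_j\la_j^{-1}=\prod_i i^{-m_i(\la)}$, that produces the denominator $\prod_i i^{m_i(\la)}m_i(\la)!$ appearing in the final expression.
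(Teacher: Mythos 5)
Your proof is correct and is exactly the argument the paper intends: Corollary \ref{saus} is stated without a separate proof as the specialization $i_1=\dots=i_k=n$ of Theorem \ref{adelaide}, to be carried out just as Corollary \ref{waus} is deduced from Theorem \ref{perth}, and your computation does precisely this, with the sign $(-1)^{k-\ell}$ correctly dropped. Note that your final formula, summing over $\la\vdash k$ with denominator $\prod_i i^{m_i(\la)}m_i(\la)!$, in fact repairs two typographical slips in the printed statement, which reads $\la\vdash n$ and omits the factorial signs on the multiplicities $m_i(\la)$.
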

As with Corollary \ref{waus}, this result has an expression involving
the symmetric functions.  Let $Q_k(x_1,\dots,x_k)$ be the polynomial
expressing the complete symmetric function $h_k$ in terms of power
sums $p_1,p_2,\dots,p_k$.  Then
\[
t^{\star}(\{n\}_k)=Q_k(t(n),t(2n),\dots,t(kn)) .
\]
We can extend $t^{\star}$ to any string of positive integers using
Theorem \ref{gft}.  Then there is an analogue of Eq. (\ref{tone}) for
$t^{\star}$.
\[
t^{\star}(\{1\}_k)=\sum_{\la\vdash k}
\frac1{m_1(\la)!1^{m_1(\la)}m_2(\la)!2^{m_2(\la)}\cdots}
t(\la_1)\cdots t(\la_l) .
\]
We can also express the numbers $t^{\star}(\{n\}_k)$ using generating functions.
Here we have a result similar to Theorem \ref{trep}.
\begin{thm}
For integers $n\ge 2$,
\[
1+\sum_{k=1}^\infty t^{\star}(\{n\}_k)x^{kn}=\frac{Z_n(e^{\frac{\pi i}n}\frac{x}2)}
{Z_n(e^{\frac{\pi i}n}x)} .
\]
\end{thm}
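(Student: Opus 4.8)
The plan is to mimic the proof of Theorem \ref{trep}, exploiting the fact that $t^{\star}(\{n\}_k)$ is governed by the \emph{complete} symmetric function $h_k$ rather than the elementary symmetric function $e_k$. Recall from the discussion following Corollary \ref{saus} that $t^{\star}(\{n\}_k)=\th(\P_n(h_k))$, where $\P_n:\QS\to\QS$ sends $M_{t_1,\dots,t_p}$ to $M_{nt_1,\dots,nt_p}$. Hence the left-hand side of the asserted identity is the image under the algebra homomorphism $\th\P_n$ of the generating function $\sum_{k\ge0}h_kx^{nk}$. First I would write this as $\exp\left(\sum_{k\ge1}\frac{p_kx^{nk}}{k}\right)$, the standard expression of the complete symmetric functions through power sums. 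Because $\th\P_n$ is a ring homomorphism carrying $p_k=M_{(k)}$ to $\th(M_{(nk)})=t(nk)$, applying it term by term through the exponential gives
\[
1+\sum_{k=1}^\infty t^{\star}(\{n\}_k)x^{kn}=\exp\left(\sum_{k\ge1}\frac{t(nk)x^{nk}}{k}\right).
\]

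Next I would substitute $t(nk)=(1-2^{-nk})\zt(nk)$ and factor the exponential as
\[
\exp\left(\sum_{k\ge1}\frac{\zt(nk)x^{nk}}{k}\right)\exp\left(-\sum_{k\ge1}\frac{\zt(nk)(x/2)^{nk}}{k}\right),
\]
exactly as in Theorem \ref{trep}, except that here neither sum carries the alternating sign $(-1)^{k-1}$ that appeared in the elementary-symmetric computation.

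The crucial and essentially only delicate step is to recognize these two factors as values of $Z_n$. From the proof of Theorem \ref{trep} we have $Z_n(x)=\exp\left(\sum_{k\ge1}\frac{(-1)^{k-1}\zt(nk)x^{nk}}{k}\right)$. Setting $\om=e^{\pi i/n}$, so that $\om^{nk}=(-1)^k$, the substitution $x\mapsto\om x$ multiplies the $k$th summand by $(-1)^k$, converting the sign $(-1)^{k-1}$ into $(-1)^{2k-1}=-1$; thus $Z_n(\om x)=\exp\left(-\sum_{k\ge1}\frac{\zt(nk)x^{nk}}{k}\right)$. Applying this with argument $x$ identifies the first factor above as $Z_n(\om x)^{-1}$, and applying it with argument $x/2$ identifies the second factor as $Z_n(\om x/2)$. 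Multiplying yields $Z_n(e^{\pi i/n}\tfrac{x}{2})/Z_n(e^{\pi i/n}x)$, which is the claim.

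The main obstacle, such as it is, lies entirely in this sign bookkeeping: passing from $e_k$ to $h_k$ removes the built-in alternating sign that let Theorem \ref{trep} match $Z_n$ directly, and it must be reintroduced by evaluating $Z_n$ at the rotated argument $e^{\pi i/n}x$. One should note that the substitution $x\mapsto e^{\pi i/n}x$ is harmless at the level of formal power series in $x^n$, so all manipulations are legitimate. As a sanity check I would verify the case $n=2$: since $Z_2(x)=\sinh(\pi x)/(\pi x)$, the right-hand side becomes $Z_2(ix/2)/Z_2(ix)=\sec(\pi x/2)$, which indeed reproduces the stated secant/Euler-number generating function for $t^{\star}(\{2\}_k)$.
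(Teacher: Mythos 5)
Your proof is correct and follows essentially the same route as the paper's: the paper likewise notes that $t^{\star}(\{n\}_k)=\th\P_n(h_k)$, writes the generating function as $\exp\bigl(\sum_{k\ge1}t(nk)x^{kn}/k\bigr)=\exp\bigl(\sum_{k\ge1}(1-2^{-nk})\zt(nk)x^{kn}/k\bigr)$, and then simply says ``now proceed as in the proof of Theorem \ref{trep}.'' Your explicit sign bookkeeping with $\om=e^{\pi i/n}$, $\om^{nk}=(-1)^k$, giving $Z_n(\om x)=\exp\bigl(-\sum_{k\ge1}\zt(nk)x^{nk}/k\bigr)$, is precisely the step the paper leaves implicit, and your $n=2$ sanity check agrees with the stated Euler-number evaluation of $t^{\star}(\{2\}_k)$.
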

\begin{proof}
We note that $t^{\star}(\{n\}_k)$ is the image under $\th\P_n$ of $h_k$,
so that
\begin{multline*}
1+\sum_{k=1}^\infty t^{\star}(\{n\}_k)x^{kn}=\th\P_n(H(x))
=\exp\left(\sum_{k\ge 1}\frac{t(nk)x^{kn}}{k}\right)\\
=\exp\left(\sum_{k\ge 1}\frac{(1-2^{-nk})\zt(nk)x^{kn}}{k}\right).
\end{multline*}
Now proceed as in the proof of Theorem \ref{trep}.
\end{proof}
In the case where $n$ is an even integer $2m$, we can express 
$t^{\star}(\{n\}_k)$ as a rational multiple of $\pi^{nk}$.
Similar results have been proved by Shen and Jia \cite{SJ2017}, and
by Chung \cite{Chung}.
\begin{thm}
\label{eulern}
For positive integers $m$ and $k$,
\[
t^{\star}(\{2m\}_k)=\frac{(-1)^{mk}}{(2mk)!}\left(\frac{\pi}2\right)^{2mk}
\sum_{\substack{n_1+\dots+n_m=mk\\n_j\ge 0}}\binom{2mk}{2n_1\cdots 2n_m}
\prod_{j=1}^m e^{\frac{2\pi i}{m}(j-1)n_j}E_{2n_j} .
\]
\end{thm}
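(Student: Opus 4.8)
The plan is to obtain a closed form for the generating function $1+\sum_{k\ge1}t^{\star}(\{2m\}_k)x^{2mk}$ and then read off the coefficient of $x^{2mk}$. By the generating-function theorem immediately preceding this one, taken with $n=2m$, this series equals $Z_{2m}(e^{\pi i/(2m)}\tfrac{x}{2})/Z_{2m}(e^{\pi i/(2m)}x)$. First I would substitute the product formula $Z_{2m}(y)=(i\pi y)^{-m}\prod_{j=1}^{m}\sin(e^{(2j-1)\pi i/(2m)}\pi y)$ from \cite{BBB}. Writing $\omega=e^{\pi i/(2m)}$, the factor $e^{(2j-1)\pi i/(2m)}\cdot\omega$ simplifies to $\omega^{2j}$, so the numerator sines become $\sin(\omega^{2j}\tfrac{\pi x}{2})$ and the denominator sines become $\sin(\omega^{2j}\pi x)$, while the two prefactors $(i\pi\omega y)^{-m}$ combine into an overall constant $2^m$.

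The key simplification is the double-angle identity $\sin2\theta=2\sin\theta\cos\theta$. Taking $\theta=\omega^{2j}\tfrac{\pi x}{2}$, each quotient $\sin(\omega^{2j}\tfrac{\pi x}{2})/\sin(\omega^{2j}\pi x)$ collapses to $1/(2\cos(\omega^{2j}\tfrac{\pi x}{2}))$, and the $m$ factors of $\tfrac12$ exactly cancel the prefactor $2^m$. Thus the generating function reduces to $\prod_{j=1}^{m}\sec(\omega^{2j}\tfrac{\pi x}{2})$. Since $\omega^{2j}=e^{\pi ij/m}$ and $\sec$ is even, the $j=m$ factor $\sec(e^{\pi i}\tfrac{\pi x}{2})=\sec(\tfrac{\pi x}{2})$ agrees with the $j=1$ term under the shifted indexing, so I may rewrite the product as
\[
1+\sum_{k\ge1}t^{\star}(\{2m\}_k)x^{2mk}=\prod_{j=1}^{m}\sec\left(e^{\pi i(j-1)/m}\frac{\pi x}{2}\right),
\]
which already exhibits the phases $e^{2\pi i(j-1)n_j/m}$ that appear in the target after raising to the power $2n_j$.

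Finally I would expand each factor by the Euler-number series $\sec t=\sum_{n\ge0}(-1)^nE_{2n}t^{2n}/(2n)!$ (which follows from $\operatorname{sech}t=\sum_{n\ge0}E_{2n}t^{2n}/(2n)!$ under $t\mapsto it$), giving
\[
\prod_{j=1}^{m}\sum_{n_j\ge0}\frac{(-1)^{n_j}E_{2n_j}}{(2n_j)!}\,e^{2\pi i(j-1)n_j/m}\left(\frac{\pi x}{2}\right)^{2n_j}.
\]
Multiplying the series and collecting, the coefficient of $x^{2mk}$ selects the terms with $n_1+\dots+n_m=mk$; then $(-1)^{n_1+\dots+n_m}=(-1)^{mk}$, the power of $x$ contributes $(\pi/2)^{2mk}$, and multiplying and dividing by $(2mk)!$ converts $\prod_j 1/(2n_j)!$ into the multinomial coefficient $\binom{2mk}{2n_1\cdots2n_m}$, yielding precisely the stated formula.

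The step I expect to be the main obstacle is the generating-function reduction of the second paragraph: one must track the prefactor $2^m$ and the phases $\omega^{2j}$ correctly through the sine quotient, and verify that the evenness reindexing of $\sec$ reproduces exactly the angles $e^{\pi i(j-1)/m}$ rather than $e^{\pi ij/m}$, so that the phase factors match the target. Once the product-of-secants form is established, extracting the coefficient is a routine multinomial computation, and the $m=1,2$ cases quoted in the introduction serve as useful checks.
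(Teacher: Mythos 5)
Your proposal is correct, but it reaches the theorem by a genuinely different route than the paper. The paper follows Muneta's method directly: it starts from the Weierstrass product for cosine, writes $\sec\left(\pi e^{\frac{\pi i k}{m}}x\right)$ as an infinite product over odd integers, multiplies over $k=0,\dots,m-1$ using $\prod_{k}\left(1-e^{\frac{2\pi ik}{m}}u\right)=1-u^m$, and recognizes the geometric-series expansion of the resulting product as $1+\sum_k t^{\star}(\{2m\}_k)(2x)^{2km}$ by the very definition (\ref{mtsv}) of the star values. You instead specialize the generating-function theorem immediately preceding the statement to $n=2m$, substitute the Broadhurst--Borwein--Bradley closed form for $Z_{2m}$, and collapse each sine quotient via the double-angle identity; I checked your bookkeeping and it is sound: the phases $e^{\frac{(2j-1)\pi i}{2m}}\cdot e^{\frac{\pi i}{2m}}=\omega^{2j}$, the prefactor $2^m$ cancelling the $m$ factors of $\tfrac12$, and the evenness reindexing $\sec(e^{\pi i}\tfrac{\pi x}2)=\sec(\tfrac{\pi x}2)$ all work exactly as you say, so both arguments converge on the same identity
\[
1+\sum_{k\ge1}t^{\star}(\{2m\}_k)x^{2mk}=\prod_{j=1}^{m}\sec\left(e^{\frac{\pi i(j-1)}{m}}\frac{\pi x}2\right),
\]
after which the Euler-number expansion and multinomial extraction are identical in the two proofs (and your extraction of the coefficient of $x^{2mk}$ needs no separate argument that the intermediate powers vanish). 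Your route is in fact the exact analogue of how the paper itself treats the non-star case, deducing $T_{2m}(x)=\prod_{j=1}^m\cos\left(e^{\frac{(2j-1)\pi i}{2m}}\frac{\pi x}2\right)$ from Theorem \ref{trep} together with the \cite{BBB} formula, so it fits the paper's framework naturally; what it costs is dependence on two prior results (the $t^{\star}$ generating-function theorem, itself proved through the quasi-symmetric function machinery, and the external \cite{BBB} evaluation of $Z_{2m}$), whereas the paper's Muneta-style proof is self-contained apart from the cosine product and works straight from the defining series. What your version buys is economy: the secant product drops out of results already in hand, with no fresh infinite-product manipulation.
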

\begin{proof}
We follow Muneta's proof \cite{M} of the corresponding result for
multiple zeta-star values.
Using the infinite product for cosine, we have
\[
\sec\left(\pi e^{\frac{\pi i k}{m}}x\right)=
\prod_{h=1}^\infty\left(1-\frac{e^{\frac{2\pi i k}{m}}(2x)^2}{(2h-1)^2}\right)^{-1} .
\]
From this follows
\begin{equation}
\label{prod}
\prod_{k=0}^{m-1}\sec\left(\pi e^{\frac{\pi i k}{m}}x\right)=
\prod_{h=1}^\infty\left(1-\frac{(2x)^{2m}}{(2h-1)^{2m}}\right)^{-1}
\end{equation}
since 
\[
(1-e^{\frac{2\pi i}{m}}u)(1-e^{\frac{4\pi i}{m}}u)\cdots(1- e^{\frac{2(m-1)\pi i}{m}}u)
=1-u^m .
\]
Now the right-hand side of Eq. (\ref{prod}) can be expanded as
\begin{multline}
\label{serex}
1+\sum_{h_1\ge 1}\frac{(2x)^{2m}}{(2h_1-1)^{2m}}+\sum_{h_1\ge h_2\ge 1}
\frac{(2x)^{4m}}{(2h_1-1)^{2m}(2h_2-1)^{2m}}+\cdots\\
=1+\sum_{k=1}^\infty t^{\star}(\{2m\}_k)(2x)^{2km} .
\end{multline}
On the other hand, using the Maclaurin series for secant we can expand
the left-hand side of Eq. (\ref{prod}) as
\begin{multline*}
\prod_{k=0}^{m-1}\left(\sum_{j=0}^\infty \frac{(-1)^jE_{2j}}{(2j)!}
\pi^{2j}e^{\frac{2\pi i kj}{m}}x^{2j}\right)=\\
\sum_{n=0}^\infty \sum_{j_0+\dots+j_{m-1}=nm}
\frac{(-1)^{nm}E_{j_0}E_{j_1}\cdots E_{j_{m-1}}}{(2j_0)!(2j_1)!\cdots (2j_{m-1})!}
e^{\frac{2\pi i j_1}{m}+\frac{4\pi i j_2}{m}+\frac{2(m-1)\pi i j_{m-1}}{m}}(\pi x)^{2mn},
\end{multline*}
where we have used the fact that only powers of $x^{2m}$ appear in the 
expansion.  The latter expression can be written as
\[
(-1)^{nm}\sum_{n=0}^\infty \frac{(\pi x)^{2nm}}{(2nm)!}
\sum_{n_1+\dots+n_m=nm}\binom{2nm}{2n_1\cdots 2n_m}\prod_{j=1}^m
e^{\frac{2\pi i}{m}(j-1)n_j}E_{2n_j} ,
\]
and comparing coefficients with Eq. (\ref{serex}) gives the conclusion.
\end{proof}
\section{Multiple $t$-values and alternating multiple zeta values}
\par
Following \cite{H2000}, let $\E_{2}$ be the underlying rational vector
space of the noncommutative polynomial algebra on generators $z_{n,p}$,
$n\in\{1,2,\dots\}$ and $p\in\{0,1\}$, with the product $*$ defined 
recursively by
\begin{equation}
\label{e2rec}
aw_1*bw_2=a(w_1*bw_2)+b(aw_1*w_2)+(a\circ b)(w_1*w_2)
\end{equation}
for words $w_1,w_2$ and letters $a,b$.  
Here the operation $\circ$ is given by
\[
z_{n_1,p_1}\circ z_{n_2,p_2}=z_{n_1+n_2,p_1+p_2} ,
\]
where addition in the second subscript is taken mod 2.
Then if $\E_2^0$ is the subspace generated by all words that do not
begin with $z_{1,0}$, $(\E_2^0,*)$ is a subalgebra of $(\E_2,*)$.
There is a homomorphism $Z:\E_2^0\to\R$ defined by
\[
Z(z_{n_1,p_1}\cdots z_{n_k,p_k})=
\sum_{m_1>\dots>m_k\ge 1}\frac{(-1)^{m_1p_1+\dots+m_kp_k}}{m_1^{n_1}\cdots m_k^{n_k}} .
\]
The series on the right-hand side of the preceding equation is
an alternating or ``colored'' multiple zeta value.
The usual notation for multiple zeta values can be extended to
such quantities by using an upper bar, e.g., $\zt(\bar 3,2,\bar 1)$
denotes $Z(z_{3,1}z_{2,0}z_{1,1})$.
\par
For any $u\in\E_2$ and monomial $w$ of $\E_2$, let 
$\coeff_u(w)$ denote the coefficient of $w$ in $u$.
Call an element $u\in\E_2$ totally symmetric if 
\[
\coeff_u(z_{n_1,p_1}\cdots z_{n_k,p_k})=
\coeff_u(z_{n_1,0}\cdots z_{n_k,0}) 
\]
for any monomial $z_{n_1,p_1}\cdots z_{n_k,p_k}$ of $\E_2$,
and totally antisymmetric if
\[
\coeff_u(z_{n_1,p_1}\cdots z_{n_k,p_k})=
(-1)^{p_1+\dots+p_k}\coeff_u(z_{n_1,0}\cdots z_{n_k,0}) .
\]
Let $\E_2^S$ be the vector space of totally symmetric elements of $\E_2$,
and $\E_2^A$ the vector space of totally antisymmetric elements.
\begin{thm}
$\E_2^S$ and $\E_2^A$ are subalgebras of $(\E_2,*)$.
Further, both are isomorphic to $(\H^1,*)$.
\end{thm}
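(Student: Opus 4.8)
The plan is to recognize both $(\H^1,*)$ and $(\E_2,*)$ as quasi-shuffle algebras and to produce the isomorphisms by functoriality of that construction in the underlying alphabet \cite{H2000}. Indeed, $(\H^1,*)$ is the quasi-shuffle algebra on the alphabet $\{z_1,z_2,\dots\}$ with commutative product $z_i\circ z_j=z_{i+j}$, and $(\E_2,*)$ is the quasi-shuffle algebra on $\{z_{n,p}\}$ with $z_{n_1,p_1}\circ z_{n_2,p_2}=z_{n_1+n_2,p_1+p_2}$ (second index mod $2$); in both cases the recursion defining $*$ is exactly the quasi-shuffle recursion. Functoriality asserts that any linear map between these alphabet spaces respecting $\circ$ extends, by letterwise application, to an algebra homomorphism of the corresponding quasi-shuffle algebras.

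First I would introduce $s\colon\H^1\to\E_2$ and $a\colon\H^1\to\E_2$ determined on letters by $s(z_n)=\tfrac12(z_{n,0}+z_{n,1})$ and $a(z_n)=\tfrac12(z_{n,0}-z_{n,1})$. The one computation that is not purely formal is that $s$ and $a$ respect $\circ$: expanding $s(z_n)\circ s(z_m)$ into four colored terms and using $1+1\equiv0\pmod2$ makes them collapse to $\tfrac12(z_{n+m,0}+z_{n+m,1})=s(z_{n+m})$, and the identical cancellation, now carrying signs because $z_{n,1}$ enters squared, gives $a(z_n)\circ a(z_m)=a(z_{n+m})$. By functoriality $s$ and $a$ are then algebra homomorphisms, and on words they are
\[
s(z_{n_1}\cdots z_{n_k})=\frac1{2^k}\sum_{p_1,\dots,p_k}z_{n_1,p_1}\cdots z_{n_k,p_k},
\]
\[
a(z_{n_1}\cdots z_{n_k})=\frac1{2^k}\sum_{p_1,\dots,p_k}(-1)^{p_1+\dots+p_k}z_{n_1,p_1}\cdots z_{n_k,p_k}.
\]

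It remains to identify the images. By definition a totally symmetric (resp.\ totally antisymmetric) element is determined by its coefficients on the all-zero-color monomials and has the prescribed coefficients on the others; hence the totalizations $\sum_{p_1,\dots,p_k}z_{n_1,p_1}\cdots z_{n_k,p_k}$ form a basis of $\E_2^S$, and the signed totalizations form a basis of $\E_2^A$. Thus $s$ carries the monomial basis of $\H^1$, up to the nonzero scalars $2^{-k}$, bijectively onto a basis of $\E_2^S$, so $s$ is a linear isomorphism onto $\E_2^S$; likewise $a$ is a linear isomorphism onto $\E_2^A$. Being algebra homomorphisms with these images, $s$ and $a$ show that $\E_2^S$ and $\E_2^A$ are subalgebras of $(\E_2,*)$ and furnish the claimed isomorphisms with $(\H^1,*)$.

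The only obstacle is the $\circ$-compatibility of $s$ and $a$; once that mod-$2$ cancellation is in hand the rest is formal. If one prefers to avoid invoking functoriality, the two needed facts---that $\E_2^S$ and $\E_2^A$ are closed under $*$ and that $s,a$ are homomorphisms---follow directly by induction on total degree from the recursion for $*$, the induction simply tracking how the $\circ$-term redistributes colors, which is the same cancellation. I would also note the sign involution $\ep$ of $(\E_2,*)$ with $\ep(z_{n,p})=(-1)^pz_{n,p}$ (an automorphism, since the sign is multiplicative under both concatenation and $\circ$) interchanges $\E_2^S$ and $\E_2^A$ and satisfies $a=\ep\circ s$, so the second isomorphism can alternatively be deduced from the first.
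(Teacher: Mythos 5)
Your proof is correct, but it takes a genuinely different route from the paper's. The paper works through the power-series representation $\phi:\E_2\to\Q[[t_1,t_2,\dots]]$ of \cite{H2000}: it computes that for $u$ totally symmetric (resp.\ antisymmetric) the coefficient $\coeff_{\phi(u)}(t_{m_1}^{n_1}\cdots t_{m_k}^{n_k})$ vanishes unless all $m_i$ are even (resp.\ odd), in which case it equals $2^k\coeff_u(z_{n_1,0}\cdots z_{n_k,0})$, so that $\phi$ carries $\E_2^S$ and $\E_2^A$ onto the rings of quasi-symmetric functions in the even-indexed and odd-indexed variables, each isomorphic to $\QS\cong(\H^1,*)$ by Theorem \ref{sym}. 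You instead stay inside $\E_2$ and construct the isomorphisms directly, as quasi-shuffle homomorphisms determined on letters by $z_n\mapsto\frac12(z_{n,0}\pm z_{n,1})$: your mod-$2$ cancellation verifying $\circ$-compatibility is right, the induction you sketch for upgrading this to $*$-multiplicativity is sound because the recursion defining $*$ is bilinear in the leading letters, and your identification of the (signed) totalizations as bases of $\E_2^S$ and $\E_2^A$ is immediate from the definitions, so the images and the subalgebra property follow. Note that your $s$ and $a$ are exactly $2^{-k}$ times the maps $S$ and $A$ that the paper introduces right after this theorem, so you in fact prove a little more than the paper states explicitly, namely that $S$ and $A$ are (after rescaling) algebra isomorphisms onto $\E_2^S$ and $\E_2^A$; your sign involution $\ep$ with $a=\ep\circ s$ is a pleasant extra. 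What your route buys is self-containedness---no embedding into power series is needed. What the paper's route buys is that its coefficient computation simultaneously produces Eqs.\ (\ref{phis}) and (\ref{phia}), which are precisely what is needed downstream (via the evaluation $\ev$) to derive Corollary \ref{expr} and the parity-decomposition corollary; on your approach those series realizations would still have to be established separately.
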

\begin{proof}
We use the result of \cite{H2000} that $\E_2$ is isomorphic to 
a subring of the power series ring $\Q[[t_1,t_2,\dots]]$ via the
map $\phi:\E_2\to\Q[[t_1,t_2,\dots]]$ given by
\[
\phi(z_{n_1,p_1}\cdots z_{n_k,p_k})=\sum_{m_1>\dots>m_k\ge 1}(-1)^{m_1p_1+\cdots+
m_kp_k}t_{m_1}^{n_1}\cdots t_{m_k}^{n_k} .
\]
Now let $u\in\E_2^S$, and consider $\phi(u)\in\Q[[t_1,t_2,\dots]]$.
For any monomial $z_{n_1,p_1}\cdots z_{n_k,p_k}$ occurring in $u$,
\begin{align*}
\coeff_{\phi(u)}(t_{m_1}^{n_1}\cdots t_{m_k}^{n_k})&=
\sum_{p_1=0}^1\cdots\sum_{p_k=0}^1(-1)^{m_1p_1+\dots+m_kp_k}
\coeff_u(z_{n_1,0}\cdots z_{n_k,0})\\
&=\prod_{i=1}^k(1+(-1)^{m_i})\coeff_u(z_{n_1,0}\cdots z_{n_k,0})\\
&=\begin{cases} 2^k\coeff_u(z_{n_1,0}\cdots z_{n_k,0}),
&\text{if all the $m_i$ are even,}\\
0,&\text{otherwise.}\end{cases}
\end{align*}
Thus $\phi$ sends $\E_2^S$ to the subring of $\Q[[t_1,t_2,\dots]]$
generated by the power series
\[
\sum_{m_1>m_2>\dots>m_k\ge 1,\ \text{$m_i$ even}} t_{m_1}^{n_1}\cdots t_{m_k}^{n_k} .
\]
This is evidently isomorphic to $\QS$.
\par
Now suppose $u\in\E_2^A$.  For any monomial $z_{n_1,p_1}\cdots z_{n_k,p_k}$
occurring in $u$, we have
\begin{align*}
\coeff_{\phi(u)}(t_{m_1}^{n_1}&\cdots t_{m_k}^{n_k})=
\sum_{p_1=0}^1\cdots\sum_{p_k=0}^1(-1)^{m_1p_1+\dots+m_kp_k}
\coeff_u(z_{n_1,p_1}\cdots z_{n_k,p_k})\\
&=\sum_{p_1=0}^1\cdots\sum_{p_k=0}^1(-1)^{(m_1+1)p_1+\dots+(m_k+1)p_k}
\coeff_u(z_{n_1,0}\cdots z_{n_k,0})\\
&=\prod_{i=1}^k(1+(-1)^{m_i+1})\coeff_u(z_{n_1,0}\cdots z_{n_k,0})\\
&=\begin{cases} 2^k\coeff_u(z_{n_1,0}\cdots z_{n_k,0}),
&\text{if all the $m_i$ are odd,}\\
0,&\text{otherwise.}\end{cases}
\end{align*}
Thus $\phi$ sends $\E_2^A$ to the subring of $\Q[[t_1,t_2,\dots]]$
generated by power series
\[
\sum_{m_1>m_2>\dots>m_k\ge 1,\ \text{$m_i$ odd}} t_{m_1}^{n_1}\cdots t_{m_k}^{n_k} ,
\]
which is also isomorphic to $\QS$.
\end{proof}
We can define functions $S:\H^1\to\E_2^S$ and $A:\H^1\to\E_2^A$ by
\begin{align*}
S(z_{n_1}\cdots z_{n_k})&=\sum_{p_1,\dots,p_k\in\{0,1\}}z_{n_1,p_1}\cdots z_{n_k,p_k}\\
A(z_{n_1}\cdots z_{n_k})&=\sum_{p_1,\dots,p_k\in\{0,1\}}(-1)^{p_1+\dots+p_k}
z_{n_1,p_1}\cdots z_{n_k,p_k} .
\end{align*}
By the preceding proof we have
\begin{equation}
\label{phis}
\phi S(z_{n_1}\cdots z_{n_k})=2^k\sum_{m_1>\dots>m_k\ge 1,\ \text{$m_i$ even}}
t_{m_1}^{n_1} \cdots t_{m_k}^{n_k}
\end{equation}
and
\begin{equation}
\label{phia}
\phi A(z_{n_1}\cdots z_{n_k})=2^k\sum_{m_1>\dots>m_k\ge 1,\ \text{$m_i$ odd}}
t_{m_1}^{n_1} \cdots t_{m_k}^{n_k}.
\end{equation}
\par
Now consider the homomorphism $\ev:\Q[[t_1,t_2,\dots]]\to\R$ sending
$t_j$ to $\frac1j$.  Of course this doesn't make sense on all of
$\Q[[t_1,t_2,\dots]]$, but by Eq. (\ref{phia}) it does send 
$\phi A(z_{n_1}\cdots z_{n_k})$ to $2^kt(n_k,\dots,n_1)$ if $n_k>1$.  
Hence we have the following.
\begin{cor}
\label{expr}
For positive integers $a_1,\dots,a_k$ with $a_1\ge 2$,
\[
t(a_1,\dots,a_k)=\frac1{2^k}\sum_{\ep_1,\dots,\ep_k=\pm 1}\ep_1\cdots\ep_k
\zt(\ep_1\op a_1,\dots,\ep_k\op a_k) ,
\]
where the sum is over the $2^k$ $k$-tuples $(\ep_1,\dots,\ep_k)$ with
each $\ep_i\in\{1,-1\}$, and $\op$ is defined by $1\op i=i$ and
$-1\op i=\bar i$.
\end{cor}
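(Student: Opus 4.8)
The plan is to read the right-hand side as nothing more than the homomorphism $Z$ applied to the antisymmetrization $A(z_{a_1}\cdots z_{a_k})$, and then to evaluate that image using the already-established Eq.~(\ref{phia}). First I would fix the dictionary between the signs $\ep_i$ and the color bits $p_i\in\{0,1\}$: setting $p_i=0$ when $\ep_i=+1$ and $p_i=1$ when $\ep_i=-1$, so that $\ep_i=(-1)^{p_i}$, the rule $1\op a=a$, $-1\op a=\bar a$ gives
\[
\zt(\ep_1\op a_1,\dots,\ep_k\op a_k)=Z(z_{a_1,p_1}\cdots z_{a_k,p_k}),\qquad \ep_1\cdots\ep_k=(-1)^{p_1+\dots+p_k}.
\]
Summing over all $2^k$ sign patterns then identifies the right-hand side with $\frac1{2^k}\sum_{p_1,\dots,p_k}(-1)^{p_1+\dots+p_k}Z(z_{a_1,p_1}\cdots z_{a_k,p_k})$, which is exactly $\frac1{2^k}Z\bigl(A(z_{a_1}\cdots z_{a_k})\bigr)$ by the definition of $A$.

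Next I would evaluate $Z(A(z_{a_1}\cdots z_{a_k}))$. Since $a_1\ge 2$ the word lies in $\E_2^0$, so $Z$ is defined on it, and $Z$ coincides with $\ev\circ\phi$ on such monomials: applying $\ev$ (the map $t_j\mapsto 1/j$) to $\phi(z_{a_1,p_1}\cdots z_{a_k,p_k})$ returns precisely the colored series defining $Z$. Hence $Z(A(z_{a_1}\cdots z_{a_k}))=\ev\bigl(\phi A(z_{a_1}\cdots z_{a_k})\bigr)$, and Eq.~(\ref{phia}) finishes the job: $\ev$ sends $2^k\sum_{m_1>\dots>m_k,\ m_i\text{ odd}}t_{m_1}^{a_1}\cdots t_{m_k}^{a_k}$ to $2^k t(a_1,\dots,a_k)$, the series converging because $a_1\ge 2$. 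Dividing by $2^k$ gives the claim. Alternatively, to keep everything in $\R$, I would compute $Z(A(z_{a_1}\cdots z_{a_k}))$ directly by interchanging the finite $p$-sum with the $m$-summation; the inner sum produces the factor $\prod_{i}\bigl(1+(-1)^{m_i+1}\bigr)$, equal to $2^k$ when all $m_i$ are odd and to $0$ otherwise, which again isolates the odd-index part and yields $2^k t(a_1,\dots,a_k)$.

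There is no serious analytic difficulty here; the content is entirely the bookkeeping of the previous proof, and the one point needing care is the well-definedness of $\ev$, which the paper already flags does not make sense on all of $\Q[[t_1,t_2,\dots]]$. The antisymmetrization $A$ is exactly what rescues this: by Eq.~(\ref{phia}) it projects onto the odd part of the summation range, so the series $\ev\phi A(z_{a_1}\cdots z_{a_k})$ is a genuine convergent multiple $t$-value (given $a_1\ge 2$) rather than a formal object. The only other thing to double-check is the sign/bar correspondence $\ep_i=(-1)^{p_i}$, i.e.\ that a barred argument carries the factor $-1$ while an unbarred one carries $+1$, which is where I expect the sole risk of a bookkeeping slip.
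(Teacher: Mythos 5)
Your proposal is correct and takes essentially the same approach as the paper: the paper likewise obtains the corollary by applying the evaluation map $\ev$ ($t_j\mapsto\frac1j$) to Eq.~(\ref{phia}), with $a_1\ge2$ guaranteeing convergence, and your sign/bar dictionary $\ep_i=(-1)^{p_i}$ together with the product $\prod_i\bigl(1+(-1)^{m_i+1}\bigr)$ is exactly the bookkeeping implicit in the paper's one-line deduction. The only divergence is cosmetic: the paper routes $Z$ through a reversal ($Z=\ev\phi R$, reflecting the quasi-symmetric-function convention of the cited source), whereas under the definition of $\phi$ as printed here your identification $Z=\ev\circ\phi$ on these monomials is the consistent reading and yields the same identity.
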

For double $t$-values this is
\begin{equation}
\label{k2}
t(a,b)=\frac14(\zt(a,b)-\zt(a,\bar b)-\zt(\bar a,b)+\zt(\bar a,\bar b)) ,
\end{equation}
as stated in \cite{KT} and \cite{NT}.
The preceding result expresses a $t$-value of depth $k$ as a sum
of $2^k$ alternating multiple zeta values.  Actually one can do somewhat
better:  it is possible to write such a multiple $t$-value as a sum
of $2^{k-1}$ alternating multiple zeta values as follows.
We require a bit of additional notation.  For $p\le k$, let
$L_p\zt(i_1,\dots,i_k)$ be the sum of all $\binom{k}{p}$ alterating multiple 
zeta values in which the upper bar is applied to exactly $p$ of the positive 
integers $i_j$, e.g.,
\[
L_2\zt(i_1,i_2,i_3)=\zt(\bar i_1,\bar i_2,i_3)+\zt(\bar i_1,i_2,\bar i_3)
+\zt(i_1,\bar i_2,\bar i_3) .
\]
Then we have the following result.
\begin{cor}
\label{expr2}
For positive integers $a_1,\dots,a_k$ with $a_1\ge 2$,
\begin{multline*}
t(a_1,\dots,a_k)=\left(\frac1{2^{k-1}}-\frac1{2^{a_1+\dots+a_k}}\right)
\zt(a_1,\dots,a_k)\\
+\frac1{2^{k-1}}\sum_{\text{$2\le p\le k$ even}}L_p\zt(a_1,\dots,a_k) .
\end{multline*}
\end{cor}
\begin{proof}
Using the notation just introduced, the previous corollary can be stated
\begin{equation}
\label{asy}
2^{-k}\sum_{p=0}^k(-1)^pL_p\zt(a_1,\dots,a_k)=t(a_1,\dots,a_k).
\end{equation}
Now apply $\ev$ to Eq. (\ref{phis}) above to get
\[
2^{-k}\sum_{p=0}^kL_p\zt(a_1,\dots,a_k)
=\sum_{\substack{n_1>\dots>n_k\ge 1\\ \text{$n_i$ even}}}\frac1{n_1^{a_1}\cdots n_k^{a_k}}
=2^{-a_1-\dots-a_k}\zt(a_1,\dots,a_k),
\]
which when added to Eq. (\ref{asy}) gives the conclusion.
\end{proof}
\par\noindent
We note that already in the case $k=2$, Eq. (\ref{k2}) can be
replaced by
\[
t(a,b)=\left(\frac12-\frac1{2^{a+b}}\right)\zt(a,b)+\frac12\zt(\bar a,\bar b).
\]
\par
The function $Z:\E_2^0\to\R$ can be written as the composition $\ev\phi R$,
where $R$ sends $z_{n_1,p_1}\cdots z_{n_k,p_k}$ to $z_{n_k,p_k}\cdots z_{n_1,p_1}$.
(Note that $\ev$ makes sense on $\phi R(\E_2^0)$.)
This gives us the following result.
\begin{thm}
The image $Z(\E_2^S\cap\E_2^0)\subset\R$ is the set $\Z$ of rational
linear combinations of multiple zeta values, and the image
$Z(\E_2^A\cap\E_2^0)\subset\R$ is the set $\T$ of rational linear
combinations of multiple $t$-values.
\end{thm}
\section{A generating function}
In this section we obtain a formula for the generating function of
height one multiple $t$-values, i.e.,
\[
H(x,y)=\sum_{i,j\ge 1}t(i+1,\{1\}_{j-1})x^iy^j=
\sum_{i,j\ge 1}\th(z_{i+1}z_1^{j-1})x^iy^j .
\]
To this end we introduce some functions indexed by words as follows.
For a nonempty word $w=z_{p_1}z_{p_2}\cdots z_{p_k}$ of $\H^1$
and $r\in\C$, define
\[
\L_r(w)=
\sum_{j_1>\dots>j_k\ge 0}\frac{r^{2j_1+1}}{(2j_1+1)^{p_1}\cdots (2j_k+1)^{p_k}} .
\]
Then $\L_r(w)$ converges if $|r|<1$, and
$\L_1(z_{p_1}\cdots z_{p_k})=t(p_1,\dots,p_k)$ if $p_1>1$.
We have the following result.
\begin{lem}  
\label{difl}
For $k>1$,
\[
\frac{d}{dr}\L_r(z_{p_1}z_{p_2}\cdots z_{p_k})=\begin{cases}
\frac1{r}\L_r(z_{p_1-1}z_{p_2}\cdots z_{p_k}),&\text{if $p_1>1$,}\\
\frac{r}{1-r^2}\L_r(z_{p_2}\cdots z_{p_k}), &\text{if $p_1=1$.}
\end{cases}\]
In the case $k=1$, we have
\[
\frac{d}{dr}\L_r(z_{p_1})=\begin{cases}
\frac1{r}\L_r(z_{p_1-1}),& \text{if $p_1>1$,}\\
\frac1{1-r^2},& \text{if $p_1=1$.}
\end{cases}\]
\end{lem}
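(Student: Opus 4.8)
The plan is to treat $\L_r(w)$ as an honest power series in $r$ and differentiate it term by term. Collecting the terms of the defining sum according to the power of $r$, which is $2j_1+1$ and so is determined entirely by the outermost index $j_1$, the coefficient of $r^{2j_1+1}$ equals $(2j_1+1)^{-p_1}$ times the finite sum $\sum_{j_1>j_2>\dots>j_k\ge0}(2j_2+1)^{-p_2}\cdots(2j_k+1)^{-p_k}$. Since it is already stated that $\L_r(w)$ converges for $|r|<1$, this power series has radius of convergence at least $1$, and for $|r|<1$ we may differentiate under the summation sign, using $\frac{d}{dr}r^{2j_1+1}=(2j_1+1)r^{2j_1}$.

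For the case $p_1>1$, the factor $2j_1+1$ produced by differentiation lowers the exponent on $(2j_1+1)$ from $p_1$ to $p_1-1$, while writing $r^{2j_1}=r^{-1}r^{2j_1+1}$ extracts the prefactor $\frac1r$. What remains is exactly the series defining $\L_r(z_{p_1-1}z_{p_2}\cdots z_{p_k})$, giving the first case. The identical computation with a single index yields the $k=1$, $p_1>1$ formula.

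For $p_1=1$ the factor $2j_1+1$ cancels the single power of $(2j_1+1)$ in the denominator, so the differentiated series becomes $\sum_{j_1>\dots>j_k\ge0}r^{2j_1}(2j_2+1)^{-p_2}\cdots(2j_k+1)^{-p_k}$, with no dependence on $j_1$ left in the denominator. The key step is then to reindex: holding $j_2>\dots>j_k$ fixed and summing the inner geometric series over $j_1>j_2$ gives $\sum_{j_1>j_2}r^{2j_1}=\frac{r^{2j_2+2}}{1-r^2}$. Factoring out $\frac{r}{1-r^2}$ and writing $r^{2j_2+2}=r\cdot r^{2j_2+1}$, the remaining sum over $j_2>\dots>j_k$ is precisely $\L_r(z_{p_2}\cdots z_{p_k})$, which yields the second case. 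When $k=1$ there is no constraint $j_1>j_2$, so the geometric series is simply $\sum_{j_1\ge0}r^{2j_1}=\frac1{1-r^2}$, giving the last formula.

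The only point requiring genuine care is the legitimacy of differentiating term by term, which is immediate once one records that $\L_r(w)$ is a power series in $r$ convergent on the disk $|r|<1$. Beyond that, the argument is just the elementary geometric-series reindexing in the $p_1=1$ case, so I expect no real obstacle past the bookkeeping of the index shift.
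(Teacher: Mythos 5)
Your proof is correct and takes essentially the same route as the paper's: differentiate the defining series term by term (which the paper does with an ``evidently,'' whereas you justify it via the radius of convergence), and in the $p_1=1$ case sum the geometric series over $j_1>j_2$ to extract the factor $\frac{r}{1-r^2}$. The only cosmetic difference is in the $k=1$, $p_1=1$ case, where the paper differentiates the explicit expansion $\L_r(z_1)=r+\frac{r^3}{3}+\frac{r^5}{5}+\cdots$ while you sum $\sum_{j_1\ge 0}r^{2j_1}=\frac{1}{1-r^2}$ directly---the same computation.
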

\begin{proof}
Evidently
\[
\frac{d}{dr}\L_r(z_{p_1}z_{p_2}\cdots z_{p_k})=
\sum_{j_1>\dots>j_k\ge 0}\frac{r^{2j_1}}{(2j_1+1)^{p_1-1}\cdots (2j_k+1)^{p_k}} ,
\]
which is clearly $\frac1{r}\L_r(z_{p_1-1}z_{p_2}\cdots z_{p_k})$
if $p_1>1$.
Otherwise, it's
\[
\sum_{j_2>j_3>\dots>j_k\ge 0}\frac{r^{2(j_2+1)}+r^{2(j_2+2)}+\cdots}{(2j_2+1)^{p_2}
\cdots (2j_k+1)^{p_k}}=
\frac{r}{1-r^2}\L_r(z_{p_2}\cdots z_{p_k}).
\]
The case $k=1$ follows since
\[
\L_r(z_1)=r+\frac{r^3}3+\frac{r^5}5+\cdots=\int_0^r\frac1{1-t^2}dt .
\]
\end{proof}
We now obtain our formula for $H(x,y)$.
\begin{thm}
\label{hypg32}
\[
H(x,y)=\pFq{3}{2}{\frac{1+y}{2},\frac{1-x}{2},1}{\frac32,\frac{3-x}{2}}{1} .
\]
\end{thm}
\begin{proof}
Define generating functions
\[
H_r(x,y)=\sum_{i,j\ge 1}\L_r(z_{i+1}z_1^{j-1})x^iy^j
\]
and
\[
G_r(y)=\sum_{n\ge 1}\L_r(z_1^n)y^n=\L_r(z_1)y+\L_r(z_1^2)y^2+\L_r(z_1^3)y^3+\cdots .
\]
By Lemma \ref{difl}, the derivative of $G_r$ with respect to $r$ is
\[
\frac1{1-r^2}y+\frac{r}{1-r^2}\L_r(z_1)y^2+\frac{r}{1-r^2}\L_r(z_1^2)y^3
+\cdots
=\frac{y}{1-r^2}+\frac{ry}{1-r^2}G_r(y) .
\]
Hence 
\[
\frac{dG_r}{dr}-\frac{ry}{1-r^2}G_r=\frac{y}{1-r^2} ,
\]
or
\[
\frac{d}{dr}\left((1-r^2)^{\frac{y}2}G_r\right)=\frac{y}{1-r^2}
(1-r^2)^{\frac{y}2}=y(1-r^2)^{\frac{y}2-1} ,
\]
and thus
\begin{multline*}
G_r=(1-r^2)^{-\frac{y}2}\int_0^ry(1-s^2)^{\frac{y}2-1}ds=
y(1-r^2)^{-\frac{y}2}\frac{r}2\int_0^1(1-r^2u)^{\frac{y}2-1}u^{-\frac12}du\\
=ry(1-r^2)^{-\frac{y}2}\pFq{2}{1}{1-\frac{y}2,\frac12}{\frac32}{r^2},
\end{multline*}
where we used Euler's integral formula for ${}_2F_1$.
Then $G_r$ can be written (via \cite[15.16.1]{O})
\begin{multline*}
ry\pFq{2}{1}{\frac{y}2,\frac12}{\frac12}{r^2}
\pFq{2}{1}{1-\frac{y}2,\frac12}{\frac32}{r^2}
=y\sum_{s=0}^\infty\frac{\left(\frac{y}2+\frac12\right)\cdots
\left(\frac{y}2-\frac12+s\right)}{\left(\frac12+1\right)\cdots
\left(\frac12+s\right)}r^{2s+1} .
\end{multline*}
\par
Again using Lemma \ref{difl},
\[
\frac{dH_r}{dr}=\frac1r\sum_{i,j\ge 1}\L_r(z_iz_1^{j-1})x^iy^j=
\frac{x}{r}G_r+\frac{x}{r}H_r
\]
or
\[
\frac{dH_r}{dr}-\frac{x}{r}H_r=xy
\sum_{s=0}^\infty\frac{\left(\frac{y}2+\frac12\right)\cdots
\left(\frac{y}2-\frac12+s\right)}{\left(\frac12+1\right)\cdots
\left(\frac12+s\right)}r^{2s} .
\]
It follows that
\[
\frac{d}{dr}(r^{-x}H_r)=xy\sum_{s=0}^\infty\frac{\left(\frac{y}2
+\frac12\right)\cdots\left(\frac{y}2-\frac12+s\right)}
{\left(\frac12+1\right)\cdots\left(\frac12+s\right)}r^{2s-x}
\]
and thus
\begin{multline*}
H_r=xy\sum_{s=0}^\infty \frac{\left(\frac{y}2+\frac12\right)
\cdots\left(\frac{y}2-\frac12+s\right)}
{\left(\frac12+1\right)\cdots\left(\frac12+s\right)}\frac{r^{2s+1}}{2s-x+1}
=\\
\frac{xyr}{1-x}\sum_{s=0}^\infty
\frac{\left(\frac12+\frac{y}2\right)_s\left(\frac12-\frac{x}2\right)_s}
{\left(\frac32\right)_s\left(\frac32-\frac{x}2\right)_s}r^{2s}
=\frac{xyr}{1-x}
\pFq{3}{2}{\frac{1+y}2,\frac{1-x}2,1}{\frac32,\frac{3-x}2}{r^2}
\end{multline*}
where $(x)_n$ means $x(x+1)\cdots (x+n-1)$.
Now set $r=1$ to obtain the conclusion.
\end{proof}
The coefficient of $xy$ in the definition of $H(x,y)$ is $t(2)$, while 
from Theorem \ref{hypg32} it can be seen to be 
${}_3F_2\left(\frac12,\frac12,1;\frac32,\frac32;1\right)$, so these
must be equal.  
In fact this equality generalizes, but first we need a lemma.
(As above, $\{a\}_n$ means $n$ repetitions of $a$.)
\begin{lem}
\label{difhg}
If
\[
\frac{x}{1-x}\pFq{3}{2}{a,b,\frac{1-x}2}{c,\frac{3-x}2}{1}=
\sum_{j=1}^\infty p_jx^j ,
\]
then
\[
p_n=\pFq{n+2}{n+1}{a,b,\left\{\frac12\right\}_n}{c,\left\{\frac32\right\}_n}{1}.
\]
\end{lem}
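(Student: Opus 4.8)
The plan is to reduce both sides to the same explicit series in the summation index $s$ by exploiting the fact that the two $x$-dependent parameters of the ${}_3F_2$ differ by exactly $1$, so that their Pochhammer ratio telescopes. First I would record the identity
\[
\frac{\left(\frac{1-x}2\right)_s}{\left(\frac{3-x}2\right)_s}
=\frac{\frac{1-x}2}{\frac{1-x}2+s}=\frac{1-x}{1-x+2s},
\]
which holds because $\left(\frac{3-x}2\right)_s=\prod_{j=1}^{s}\left(\frac{1-x}2+j\right)$ is just $\left(\frac{1-x}2\right)_s$ with the first factor removed and one extra factor appended. Substituting this into the series definition of the ${}_3F_2$ evaluated at $1$ gives
\[
{}_3F_2\left(a,b,\tfrac{1-x}2;c,\tfrac{3-x}2;1\right)
=(1-x)\sum_{s\ge0}\frac{(a)_s(b)_s}{(c)_s\,s!}\,\frac1{1-x+2s},
\]
so the prefactor $\frac{x}{1-x}$ exactly cancels the $(1-x)$ and leaves
\[
\frac{x}{1-x}\,{}_3F_2\left(a,b,\tfrac{1-x}2;c,\tfrac{3-x}2;1\right)
=x\sum_{s\ge0}\frac{(a)_s(b)_s}{(c)_s\,s!}\,\frac1{(2s+1)-x}.
\]

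Next I would expand each denominator as a geometric series $\frac1{(2s+1)-x}=\sum_{m\ge0}x^m/(2s+1)^{m+1}$, valid for small $|x|$, and interchange the two summations. Collecting the coefficient of $x^n$ (so that $m+1=n$, forcing $n\ge1$) yields
\[
p_n=\sum_{s\ge0}\frac{(a)_s(b)_s}{(c)_s\,s!}\,\frac1{(2s+1)^n}.
\]

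Finally I would run the telescoping identity in reverse. Since $\frac{(\frac12)_s}{(\frac32)_s}=\frac1{2s+1}$ by the same argument as above, raising to the $n$th power gives $(2s+1)^{-n}=\left[(\tfrac12)_s/(\tfrac32)_s\right]^n$; inserting this into the displayed formula for $p_n$ recognizes the sum as precisely the series defining ${}_{n+2}F_{n+1}\!\left(a,b,\{\tfrac12\}_n;c,\{\tfrac32\}_n;1\right)$, which is the claim.

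All the manipulations are elementary, and the only point that needs any care is justifying the interchange of the sum over $s$ with the geometric expansion in $x$, together with the rearrangement into a genuine power series in $x$. This is where I expect the only real—though still routine—work to lie; since everything is being handled as a convergent power series in $x$ in a neighborhood of $0$, absolute convergence for $|x|$ small secures the rearrangement.
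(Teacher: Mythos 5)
Your proof is correct and follows essentially the same route as the paper: the same telescoping of the Pochhammer ratio $\left(\frac{1-x}2\right)_s/\left(\frac{3-x}2\right)_s$ reduces the left-hand side to $x\sum_{s\ge0}\frac{(a)_s(b)_s}{(c)_s\,s!}\frac1{2s+1-x}$, and the same identity $\left(\frac12\right)_s/\left(\frac32\right)_s=\frac1{2s+1}$ identifies $p_n$ with the stated ${}_{n+2}F_{n+1}$. The only (immaterial) difference is that you extract the coefficient of $x^n$ by expanding $1/(2s+1-x)$ as a geometric series and interchanging sums, whereas the paper computes the $n$th derivative at $x=0$ of the same expression.
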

\begin{proof}
Note that
\begin{multline*}
\frac{x}{1-x}\pFq{3}{2}{a,b,\frac{1-x}2}{c,\frac{3-x}2}{1}=
\frac{x}{1-x}\sum_{s=0}^\infty\frac{(a)_s(b)_s\left(\frac{1-x}2\right)_s}
{(c)_s\left(\frac{3-x}2\right)_s}\frac1{s!}\\
=x\sum_{s=0}^\infty\frac{(a)_s(b)_s}{(c)_s s!}\frac1{2s+1-x},
\end{multline*}
whose $n$th derivative is seen to be
\[
n!\sum_{s=0}^\infty \frac{(a)_s(b)_s}{(c)_s s!}\frac1{(2s+1-x)^n}+
n!x\sum_{s=0}^\infty \frac{(a)_s(b)_s}{(c)_s s!}\frac1{(2s+1-x)^{n+1}} .
\]
Then
\[
p_n=\frac1{n!}\frac{d^n}{dx^n}\bigg|_{x=0}
\frac{x}{1-x}\pFq{3}{2}{a,b,\frac{1-x}2}{c,\frac{3-x}2}{1}=
\sum_{s=0}^\infty\frac{(a)_s(b)_s}{(c)_ss!}\frac1{(2s+1)^n},
\]
and the conclusion follows since
$\left(\frac12\right)_s/\left(\frac32\right)_s=\frac1{2s+1}$.
\end{proof}
Now we can deduce the following corollary of Theorem \ref{hypg32}.
\begin{cor}
For $n\ge 2$,
$
t(n)={}_{n+1}F_n\left(1,\left\{\frac12\right\}_n;\left\{\frac32\right\}_n;1
\right) .
$
\end{cor}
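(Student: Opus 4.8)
The plan is to extract the coefficient of $y^1$ from both representations of $H(x,y)$ and to match them by means of Lemma \ref{difhg}. On the series side, the definition $H(x,y)=\sum_{i,j\ge 1}t(i+1,\{1\}_{j-1})x^iy^j$ shows that the coefficient of $y$ is $\sum_{i\ge 1}t(i+1)x^i$, since the terms with $j=1$ are exactly the $t(i+1)$ (no trailing $1$'s). Thus the coefficient of $x^n y$ in $H(x,y)$ equals $t(n+1)$, and proving the corollary reduces to identifying this same coefficient on the hypergeometric side.

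First I would isolate the coefficient of $y$ in the formula of Theorem \ref{hypg32}. Because the prefactor $\frac{xy}{1-x}$ already supplies one power of $y$, I only need the $y^0$-part of the ${}_3F_2$, which is obtained by setting $y=0$; this replaces the top parameter $\frac12+\frac y2$ by $\frac12$. Hence the coefficient of $y$ in $H(x,y)$ is
\[
\frac{x}{1-x}\,{}_3F_2\left(\frac12,1,\frac12-\frac x2;\frac32,\frac32-\frac x2;1\right).
\]
(Here one uses that $\left(\frac12+\frac y2\right)_s$ is a polynomial in $y$ whose constant term is $\left(\frac12\right)_s$, so that evaluation at $y=0$ really does pick out the $y^0$ coefficient of the ${}_3F_2$.)

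Next I would observe that $\frac12-\frac x2=\frac{1-x}2$ and $\frac32-\frac x2=\frac{3-x}2$, so the displayed expression is precisely the left-hand side of Lemma \ref{difhg} with $a=\frac12$, $b=1$, and $c=\frac32$. Applying that lemma, the coefficient of $x^n$ is $p_n={}_{n+2}F_{n+1}\left(\frac12,1,\{\frac12\}_n;\frac32,\{\frac32\}_n;1\right)$. Absorbing the stray $\frac12$ into the block of upper $\frac12$'s and the stray $\frac32$ into the block of lower $\frac32$'s gives
\[
t(n+1)={}_{n+2}F_{n+1}\left(\frac12,1,\left\{\frac12\right\}_n;\frac32,\left\{\frac32\right\}_n;1\right)={}_{n+2}F_{n+1}\left(1,\left\{\frac12\right\}_{n+1};\left\{\frac32\right\}_{n+1};1\right),
\]
where the first equality is the comparison with the series side. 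Relabeling $n+1$ as $n$ (equivalently, reading the coefficient of $x^{n-1}y$) yields $t(n)={}_{n+1}F_n\left(1,\{\frac12\}_n;\{\frac32\}_n;1\right)$ for $n\ge 2$, as claimed.

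I do not expect a serious obstacle: all of the analytic content, including the legitimacy of setting $r=1$ and the convergence of the relevant hypergeometric series at argument $1$, is already packaged into Theorem \ref{hypg32} and Lemma \ref{difhg}, so what remains is bookkeeping. The two points to watch are the index shift---the coefficient of $x^i y$ is $t(i+1)$ rather than $t(i)$---and the correct merging of the extra $\frac12$ and $\frac32$ parameters into the repeated blocks, so that the final object is genuinely an ${}_{n+1}F_n$ of the advertised shape.
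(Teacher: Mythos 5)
Your proposal is correct and follows essentially the same route as the paper: extract the coefficient of $y$ in $H(x,y)$ (setting $y=0$ in the ${}_3F_2$ of Theorem \ref{hypg32}), then apply Lemma \ref{difhg} with parameters $\{1,\frac12\}$ and $c=\frac32$ and absorb the stray $\frac12$ and $\frac32$ into the repeated blocks. The only cosmetic differences are the order of $a$ and $b$ (immaterial by symmetry of the upper parameters) and your reading off the coefficient of $x^n y$ followed by relabeling, where the paper reads $x^{n-1}y$ directly.
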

\begin{proof}
Theorem \ref{hypg32} implies that
\begin{multline*}
t(n)=\text{coefficient of $x^{n-1}y$ in $H(x,y)$}=\\
\text{coefficient of $x^{n-1}$ in}\
\frac{x}{1-x}\pFq{3}{2}{1,\frac12,\frac{1-x}2}{\frac32,\frac{3-x}2}{1} .
\end{multline*}
Now apply Lemma \ref{difhg} with $a=1$, $b=\frac12$, and $c=\frac32$.
\end{proof}
Note also that
\begin{multline*}
t(2,\{1\}_{n-1})=\text{coefficient of $xy^n$ in $H(x,y)$}=\\
\text{coefficient of $y^{n-1}$ in}\
\pFq{3}{2}{1,\frac{1+y}2,\frac12}{\frac32,\frac32}{1} ,
\end{multline*}
and setting $y=1$ in Theorem \ref{hypg32} gives
\[
\sum_{j=1}^\infty t(n,\{1\}_{j-1})=\text{coefficient of $x^{n-1}$ in}\
\frac{x}{1-x}\pFq{3}{2}{1,1,\frac{1-x}2}{\frac32,\frac{3-x}2}{1} .
\]
Applying Lemma \ref{difhg} with $a=b=1$ and $c=\frac32$ to the latter 
gives the following.
\begin{cor} For $n\ge 2$,
\[
\sum_{j=1}^\infty t(n,\{1\}_{j-1})=\pFq{n+1}{n}{1,1,\left\{\frac12\right\}_{n-1}}
{\left\{\frac32\right\}_n}{1} .
\]
\end{cor}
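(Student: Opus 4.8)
The plan is to specialize the two-variable generating function of Theorem \ref{hypg32} at $y=1$ and then read off the coefficient of $x^{n-1}$, exactly as the displayed computation preceding the statement indicates. First I would observe that, since $H(x,y)=\sum_{i,j\ge1}t(i+1,\{1\}_{j-1})x^iy^j$, setting $y=1$ collapses the $j$-sum and produces
\[
H(x,1)=\sum_{i\ge1}\Bigl(\sum_{j\ge1}t(i+1,\{1\}_{j-1})\Bigr)x^i ,
\]
so that $\sum_{j\ge1}t(n,\{1\}_{j-1})$ is precisely the coefficient of $x^{n-1}$ in $H(x,1)$. The inner sums converge---for $i=1$ this is the series mentioned in the introduction that sums to twice Catalan's constant, and larger $i$ only shrinks the terms---so this specialization is legitimate rather than merely formal.

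Next I would substitute $y=1$ into the closed form of Theorem \ref{hypg32}. The upper parameter $\tfrac12+\tfrac{y}2$ becomes $1$, while $\tfrac12-\tfrac{x}2=\tfrac{1-x}2$ and $\tfrac32-\tfrac{x}2=\tfrac{3-x}2$, so
\[
H(x,1)=\frac{x}{1-x}\,{}_3F_2\!\left(1,1,\tfrac{1-x}2;\tfrac32,\tfrac{3-x}2;1\right).
\]
This is exactly the shape of generating function to which Lemma \ref{difhg} applies, with parameter choices $a=b=1$ and $c=\tfrac32$.

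Finally I would invoke Lemma \ref{difhg} directly. Writing the right-hand side as $\sum_{m\ge1}p_mx^m$, the lemma gives $p_m={}_{m+2}F_{m+1}\bigl(a,b,\{\tfrac12\}_m;c,\{\tfrac32\}_m;1\bigr)$; taking $m=n-1$ together with $a=b=1$ and $c=\tfrac32$ yields
\[
p_{n-1}={}_{n+1}F_n\!\left(1,1,\{\tfrac12\}_{n-1};\tfrac32,\{\tfrac32\}_{n-1};1\right),
\]
and absorbing the lone denominator parameter $\tfrac32$ together with the $n-1$ copies $\{\tfrac32\}_{n-1}$ into $\{\tfrac32\}_n$ gives the asserted formula. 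The only point requiring care is this bookkeeping of parameter multiplicities: one must track that specializing $y=1$ merges the parameter $\tfrac12+\tfrac{y}2$ into $1$ (so the number of $\tfrac12$'s drops to $n-1$, one fewer than the generic output of the lemma), that the chosen value $c=\tfrac32$ coincides with the repeated lower parameters, and that extracting the coefficient of $x^{n-1}$ rather than $x^n$ forces the index shift $m=n-1$. Beyond this indexing, the result is an immediate consequence of Lemma \ref{difhg}, so I expect no genuine analytic obstacle.
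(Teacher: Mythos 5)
Your proposal is correct and follows the paper's own proof exactly: specialize Theorem \ref{hypg32} at $y=1$, identify $\sum_{j\ge1}t(n,\{1\}_{j-1})$ as the coefficient of $x^{n-1}$ in $\frac{x}{1-x}\,{}_3F_2\left(1,1,\frac{1-x}2;\frac32,\frac{3-x}2;1\right)$, and apply Lemma \ref{difhg} with $a=b=1$, $c=\frac32$, merging $c$ with $\left\{\frac32\right\}_{n-1}$ to obtain $\left\{\frac32\right\}_n$. Your added remark on convergence of the inner sums (dominated by the $n=2$ case, which sums to $2G$) is a small justification the paper leaves implicit, but the argument is otherwise the same.
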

\par\noindent
In particular, for $n=2$ we have
\begin{equation}
\label{cat}
\sum_{j=1}^\infty t(2,\{1\}_{j-1})=\pFq{3}{2}{1,1,\frac12}{\frac32,\frac32}{1}
=2G,
\end{equation}
where $G=\sum_{j=0}^\infty\frac{(-1)^j}{(2j+1)^2}$ is Catalan's constant
(see \cite[Eq. (7.4.4.183a)]{KK}).
\par
Conjecture \ref{der} above implies that
$H(x,y)=e^{y\log2}A(x,y)$, for $A(x,y)$ not depending on $\log2$.
Using the tables of Appendix A, it appears that
\begin{multline*}
A(x,y)=t(2)xy+t(3)x^2y-\frac{t(3)}2xy^2+t(4)x^3y-
\left(\frac{37}{60}t(4)+\frac12\zt(\bar3,1)\right)x^2y^2\\
+\left(\frac{11}{60}t(4)+\frac14\zt(\bar3,1)\right)xy^3+\cdots
\end{multline*}
through degree 4.
\section{Alternating multiple $t$-values}
We can define alternating multiple $t$-values in a way similar to 
alternating multiple zeta values:  for the algebra $\E_2^0$ defined
in \S3, there is a homomorphism $T:\E_2^0\to\R$ defined by
\[
T(z_{n_1,p_1}\cdots z_{n_k,p_k})=\sum_{m_1>\cdots>m_k\ge 1}
\frac{(-1)^{m_1p_1+\dots+m_kp_k}}{(2m_1-1)^{n_1}\cdots (2m_k-1)^{n_k}} .
\]
We write, e.g., $t(\bar3,2,\bar1)$ for $T(z_{3,1}z_{2,0}z_{1,1})$.
Then $t(\bar n)=-\be(n)$, where $\be$ is the Dirichlet beta function
\[
\be(s)=\sum_{n=0}^\infty\frac{(-1)^n}{(2n+1)^s} ,\ \Re(s)>0.
\]
In particular, $t(\bar1)=-\frac{\pi}4$ and $t(\bar2)=-G$.
From the integral representation
\[
t(\bar1,1)=\int_0^1\frac{x\tan^{-1}x}{1+x^2}dx
\]
we have $t(\bar1,1)=\frac{G}2-\frac{\pi}8\log2$.
\par
For any positive integer $n$, define a $\Q$-linear map
$\hat\P_n:\H^1\to\E_2^0$ by $\hat\P_n(1)=1$ and
\[
\hat\P_n(z_{i_1}\cdots z_{i_k})=z_{ni_1,i_1}\cdots z_{ni_k,i_k} ,
\]
where we recall that the second subscript is to be considered mod 2.
\begin{prop} 
$\hat\P_n$ is a homomorphism.
\end{prop}
\begin{proof}
The key point to check is that the definition of $\hat\P_n$ is compatible
with the recursive definitions (\ref{recur}) and (\ref{e2rec}) of the
products in $\H^1$ and $\E_2$ respectively.  Using induction on word length,
we have, for words $w,v$ of $\H^1$,
\begin{align*}
\hat\P_n(z_iw*z_jv)&=\hat\P_n(z_i(w*z_jv)+z_j(z_iw*v)+z_{i+j}(w*v))\\
&=z_{ni,i}(\hat\P_n(w)*\hat\P_n(z_jv))+z_{nj,j}(\hat\P_n(z_iw)*\hat\P_n(v))\\
&+z_{ni+nj,i+j}(\hat\P_n(w)*\hat\P_n(v))\\
&=z_{ni,i}(\hat\P_n(w)*z_{nj,j}\hat\P_n(v))+z_{nj,j}(z_{ni,i}\hat\P_n(w)*\hat\P_n(v))\\
&+z_{ni+nj,i+j}(\hat\P_n(w)*\hat\P_n(v))\\
&=z_{ni,i}\hat\P_n(w)*z_{nj,j}\hat\P_n(v)\\
&=\hat\P_n(z_iw)*\hat\P_n(z_jv) .
\end{align*}
\end{proof}
Then we have a homomorphism $\hat\th_n=T\hat\P_n:\H^1\to\R$, and applying it 
to Eq. (\ref{elesy}) above gives an analogue of Eq. (\ref{tkrep}) for 
alternating multiple $t$-values, i.e.,
\[
t(\{\bar n\}_k)=P_k(t(\bar n),t(2n),t(\overline{3n}),t(4n),\dots),
\]
and this holds for all positive integers $n$.
For example, $t(\bar1,\bar1)=-\frac{\pi^2}{32}$, $t(\bar2,\bar2)=
\frac{G^2}2-\frac{\pi^4}{192}$, and $t(\bar3,\bar3)=-\frac{\pi^6}{30720}$.
In fact, we have an explicit formula for the generating function
of the values $t(\{\bar n\}_k)$ when $n$ is odd, providing a counterpart
to Eq. (\ref{t2m}) above (see also \cite[Eq. (35)]{BBB} for the 
alternating multiple zeta values).
\begin{thm}
\label{barrep}
For nonnegative integers $m$,
\[
1+\sum_{k=1}^\infty t(\{\overline{2m+1}\}_k)x^{(2m+1)k}=
\prod_{j=0}^{2m}\left(1-(-1)^j\sin\left(e^{\frac{\pi j i}{2m+1}}\frac{\pi x}2\right)
\right)^{\frac12} .
\]
\end{thm}
\begin{proof}
As in \S2, $P$, $H$, and $E$ are the generating functions of power-sum,
complete, and elementary symmetric functions respectively.
Starting with
\[
\frac{\pi}4\tan\frac{\pi x}2-\frac{\pi}4\sec\frac{\pi x}2=
\sum_{\text{$k\ge 0$ even}}^\infty t(\overline{k+1})x^k
+\sum_{\text{$k\ge 1$ odd}}^\infty t(k+1)x^k
\]
(which follows from Eq. (\ref{tangen}) above and \cite[Eq. (23.2.22)]{AS}) 
we have
\begin{multline*}
\frac{\pi}4\sum_{j=0}^{2m}\left[e^{\frac{2\pi ji}{2m+1}}
\tan\left(e^{\frac{2\pi ji}{2m+1}}\frac{\pi x}2\right)
-e^{\frac{2\pi ji}{2m+1}}\sec\left(e^{\frac{2\pi ji}{2m+1}}\frac{\pi x}2\right)\right]=\\
(2m+1)\left[\sum_{\text{$k\ge1$ odd}}^\infty t(\overline{(2m+1)k})x^{(2m+1)k-1}
+\sum_{\text{$k\ge 2$ even}}^\infty t((2m+1)k)x^{(2m+1)k-1}\right]
\end{multline*}
or
\[
\frac{\pi}4\sum_{j=0}^{2m}\left[\eta^j\tan\left(\eta^j\frac{\pi x}2\right)
-(-1)^j\eta^j\sec\left(\eta^j\frac{\pi x}2\right)\right]
=(2m+1)x^{2m}\hat\th_{2m+1}P(x^{2m+1})
\]
for $\eta=e^{\frac{\pi i}{2m+1}}$.  The integral of the left-hand side is
\begin{multline*}
\frac12\sum_{j=0}^{2m}\left[\log\left(\sec\left(\eta^j\frac{\pi x}2\right)\right)
-(-1)^j\log\left(\sec\left(\eta^j\frac{\pi x}2\right)+
\tan\left(\eta^j\frac{\pi x}2\right)\right)\right]\\
=\log\prod_{j=0}^{2m}\left(1+(-1)^j\sin\left(\eta^j\frac{\pi x}2\right)
\right)^{-\frac12} .
\end{multline*}
Since
\[
\frac{d}{dx}\log H(x^{2m+1})=\frac{H'(x^{2m+1})}{H(x^{2m+1})}(2m+1)x^{2m}=
(2m+1)x^{2m}P(x^{2m+1})
\]
we have
\[
\hat\th_{2m+1}H(x^{2m+1})=
\prod_{j=0}^{2m}\left(1+(-1)^j\sin\left(e^{\frac{\pi j i}{2m+1}}\frac{\pi x}2\right)
\right)^{-\frac12} ,
\]
and the conclusion follows using $E(t)=H(-t)^{-1}$.
\end{proof}
Our result for $t(\{\bar1\}_k)$ is as follows (cf. \cite[Eq. (62)]{BBB}
for $\zt(\{\bar1\}_k)$).
\begin{cor}  For all positive integers $k$,
\[
t(\{\bar1\}_k)=(-1)^{\lfloor\frac{k+1}2\rfloor}\frac{\pi^k}{2^{2k}k!} .
\]
\end{cor}
\begin{proof}
From the preceding result
\[
1+\sum_{k=1}^\infty t(\{\bar1\}_k)x^k=\sqrt{1-\sin\frac{\pi x}2}
\]
so it suffices to show that
\[
\sqrt{1-\sin z}
=\sum_{n=0}^\infty (-1)^{\lfloor\frac{n+1}2\rfloor}\frac{z^n}{2^nn!} .
\]
This can be seen by writing the left-hand side as
\begin{equation}
\label{sineq}
\sum_{n=0}^\infty\frac{(-1)^nz^{2n}}{2^{2n}(2n)!}-
\sum_{n=1}^\infty\frac{(-1)^nz^{2n-1}}{2^{2n-1}(2n-1)!}=
\cos\frac{z}2-\sin\frac{z}2
\end{equation}
and then noting that the right-hand side of Eq. (\ref{sineq}) squares 
to $1-\sin z$.
\end{proof}
We also have the following formula for $t(\{\bar3\}_k)$.
\begin{cor}  For all positive integers $k$,
\[
t(\{\bar3\}_k)=(-1)^{\lfloor\frac{k+1}2\rfloor}\frac{3\pi^{3k}}{2^{3k+1}(3k)!} .
\]
\end{cor}
\begin{proof}
In view of Theorem \ref{barrep} it suffices to show
\begin{multline*}
\sqrt{(1-\sin x)(1+\sin(\om x))(1-\sin(\om^2x))}=\\
\frac12\left[\cos x+\cos(\om x)+\cos(\om^2x)-1+\sin x-\sin(\om x)+\sin(\om^2x)
\right]
\end{multline*}
for $\om=e^{\frac{\pi i}3}$.
To show this, first express the right-hand side as
\[
2\cos\frac{x}2\cos\frac{\om x}2\cos\frac{\om^2x}2-1
+2\sin\frac{x}2\sin\frac{\om x}2\sin\frac{\om^2x}2
\]
and square it.  Now use double-angle formulas for cosine and sine
to rewrite the result in terms of sine and cosine of $x$, $\om x$, and 
$\om^2x$, and compare to the square of the left-hand side:  after
cancellation, the two can be seen to be the same using the addition
formula for cosine and the equality $1+\om^2=\om$.
\end{proof}

\newpage
\appendix
\section*{Appendix A: Multiple $t$-values of weight $\le7$}
\[
t(2,1)=-\frac12t(3)+t(2)\log2
\]
\begin{align*}
t(3,1) &= -\frac{37}{60}t(4)-\frac12\zt(\bar3,1)+t(3)\log2\\
t(2,2) &=\frac14t(4)\\
t(2,1,1) &= \frac{11}{60}t(4)+\frac14\zt(\bar 3,1)
-\frac12t(3)\log2+\frac12t(2)\log^22
\end{align*}
\begin{align*}
t(4,1) &= -\frac12t(5)-\frac17t(2)t(3)+t(4)\log2\\
t(3,2) &= -\frac12t(5)+\frac37t(2)t(3)\\
t(2,3) &= -\frac12t(5)+\frac47t(2)t(3)\\
t(3,1,1) &= -\frac{23}{248}t(5)+\frac5{21}t(2)t(3)-\frac12\zt(\bar3,1,1)
-\frac{37}{60}t(4)\log2-\frac12\zt(\bar3,1)\log2\\
&+\frac12t(3)\log^22\\
t(2,2,1) &=\frac18t(5)-\frac3{14}t(2)t(3)+\frac14t(4)\log2\\
t(2,1,2) &=\frac34t(5)-\frac12t(2)t(3)\\
t(2,1,1,1) &=-\frac{35}{248}t(5)+\frac2{21}t(2)t(3)+\frac14\zt(\bar3,1,1)
+\frac{11}{60}t(4)\log2+\frac14\zt(\bar3,1)\log2\\
&-\frac14t(3)\log^22+\frac16t(2)\log^32
\end{align*}
\begin{align*}
t(5,1) &= -\frac{73}{84}t(6)+\frac{17}{98}t(3)^2-\frac12\zt(\bar5,1)
+t(5)\log2\\
t(4,2) &= -\frac17t(6)+\frac17t(3)^2\\
t(3,3) &= -\frac12t(6)+\frac12t(3)^2\\
t(2,4) &= \frac{11}{28}t(6)-\frac17t(3)^2\\
t(4,1,1) &= \frac{45}{112}t(6)-\frac{31}{196}t(3)^2
+\frac14\zt(\bar5,1)
-\frac12t(5)\log2-\frac17t(2)t(3)\log2+\frac12t(4)\log^22\\
t(3,2,1) &=\frac{37}{112}t(6)-\frac{33}{98}t(3)^2+\frac14\zt(\bar5,1)
-\frac12t(5)\log2+\frac37t(2)t(3)\log2\\
t(3,1,2) &=\frac{61}{168}t(6)-\frac9{28}t(3)^2\\
t(2,3,1) &=-\frac2{21}t(6)-\frac3{196}t(3)^2-\frac12t(2)\zt(\bar3,1)+\frac14
\zt(\bar5,1)-\frac12t(5)\log2+\frac47t(2)t(3)\log2\\
t(2,1,3) &=\frac{27}{112}t(6)-\frac5{28}t(3)^2+\frac12t(2)\zt(\bar3,1)\\
t(2,2,2) &= \frac1{48}t(6)\\
t(3,1,1,1) &=\frac5{56}t(6)-\frac{27}{196}t(3)^2+\frac9{16}\zt(\bar5,1)
-\frac12\zt(\bar3,1,1,1)-\frac{23}{248}t(5)\log2\\
&+\frac5{21}t(2)t(3)\log2-\frac12\zt(\bar3,1,1)\log2
-\frac{37}{120}t(4)\log^22-\frac14\zt(\bar3,1)\log^22\\
&+\frac16t(3)\log^32\\
t(2,2,1,1) &=\frac{23}{336}t(6)-\frac{2}{49}t(3)^2+\frac14t(2)\zt(\bar3,1)
-\frac1{16}\zt(\bar5,1)+\frac18t(5)\log2-\frac3{14}t(2)t(3)\log2\\
&+\frac18t(4)\log^22\\
t(2,1,2,1) &
=-\frac{11}{28}t(6)+\frac{121}{392}t(3)^2-\frac38\zt(\bar5,1)
+\frac34t(5)\log2-\frac12t(2)t(3)\log2\\
t(2,1,1,2) &=-\frac1{16}t(6)+\frac18t(3)^2-\frac14t(2)\zt(\bar3,1)\\
t(2,1,1,1,1) &=-\frac3{112}t(6)+\frac2{49}t(3)^2-\frac3{16}\zt(\bar5,1)
+\frac14\zt(\bar3,1,1,1)-\frac{35}{248}t(5)\log2\\
&+\frac2{21}t(2)t(3)\log2+\frac14\zt(\bar3,1,1)\log2
+\frac{11}{120}t(4)\log^22+\frac18\zt(\bar3,1)\log^22\\
&-\frac1{12}t(3)\log^32+\frac1{24}t(2)\log^42
\end{align*}
\begin{align*}
t(6,1) &= -\frac12t(7)-\frac17t(3)t(4)-\frac1{31}t(2)t(5)+t(6)\log2\\
t(5,2) &= -\frac12t(7)+\frac27t(3)t(4)+\frac5{31}t(2)t(5)\\
t(4,3) &= -\frac12t(7)+\frac67t(3)t(4)-\frac{10}{31}t(2)t(5)\\
t(3,4) &= -\frac12t(7)+\frac17t(3)t(4)+\frac{10}{31}t(2)t(5)\\
t(2,5) &= -\frac12t(7)-\frac27t(3)t(4)+\frac{26}{31}t(2)t(5)\\
t(5,1,1) &=\frac{682}{2159}t(7)-\frac{429}{2380}t(3)t(4)+\frac{146}{1581}
t(2)t(5)+\frac2{17}\zt(\bar5,1,1)-\frac7{68}\zt(\bar3,3,1)\\
&-\frac{73}{84}t(6)\log2+\frac{17}{98}t(3)^2\log2-\frac12\zt(\bar5,1)\log2
+\frac12t(5)\log^22\\
t(4,2,1) &= \frac{19887}{34544}t(7)-\frac{1033}{1785}t(3)t(4)
+\frac{27}{1054}t(2)t(5)+\frac3{17}\zt(\bar5,1,1)-\frac1{34}\zt(\bar3,3,1)\\
&-\frac17t(6)\log2+\frac17t(3)^2\log2\\
t(4,1,2) &= \frac58t(7)-\frac{19}{28}t(3)t(4)+\frac5{62}t(2)t(5)\\
t(3,3,1) &=-\frac{20229}{17272}t(7)+\frac{56}{85}t(3)t(4)
+\frac{407}{1054}t(2)t(5)-\frac12t(3)\zt(\bar3,1)-\frac{21}{34}\zt(\bar5,1,1)\\
&+\frac7{68}\zt(\bar3,3,1)-\frac12t(6)\log2+\frac12t(3)^2\log2\\
t(3,1,3) & = \frac{28865}{8636}t(7)-\frac{1973}{1020}t(3)t(4)-\frac{560}{527}
t(2)t(5)+\frac12t(3)\zt(\bar3,1)+\frac{21}{17}\zt(\bar5,1,1)\\
&-\frac{7}{34}\zt(\bar3,3,1)\\
t(3,2,2) &= \frac3{16}t(7)+\frac{3}{28}t(3)t(4)-\frac{15}{62}t(2)t(5)\\
t(2,3,2) &= \frac58t(7)-\frac12t(2)t(5)\\
t(2,2,3) &= \frac3{16}t(7)+\frac17t(3)t(4)-\frac8{31}t(2)t(5)\\
t(2,4,1) &= -\frac{6933}{34544}t(7)+\frac{2347}{7140}t(3)t(4)
-\frac{265}{1054}t(2)t(5)-\frac3{17}\zt(\bar5,1,1)+\frac1{34}\zt(\bar3,3,1)\\
&+\frac{11}{28}t(6)\log2-\frac17t(3)^2\log2\\
t(2,1,4) &= \frac58t(7)+\frac5{28}t(3)t(4)-\frac{18}{31}t(2)t(5)\\
\end{align*}
\begin{align*}
t(4,1,1,1)&=-\frac{35117}{69088}t(7)+\frac{1801}{3570}t(3)t(4)
-\frac{106}{1581}t(2)t(5)-\frac5{34}\zt(\bar5,1,1)
+\frac9{136}\zt(\bar3,3,1)\\
&+\frac{45}{112}t(6)\log2-\frac{31}{196}t(3)^2\log2+
\frac14\zt(\bar5,1)\log2-\frac14t(5)\log^22\\
&-\frac1{14}t(2)t(3)\log^22+\frac16t(4)\log^32\\
t(3,2,1,1)&=\frac{2373}{4064}t(7)-\frac{51}{140}t(3)t(4)-\frac16t(2)t(5)
+\frac14t(3)\zt(\bar3,1)+\frac14\zt(\bar5,1,1)+\frac{37}{112}t(6)\log2\\
&-\frac{33}{98}t(3)^2\log2+\frac14\zt(\bar5,1)\log2
-\frac14t(5)\log^22+\frac3{14}t(2)t(3)\log^22\\
t(3,1,2,1)&=-\frac{77617}{69088}t(7)+\frac{3903}{4760}t(3)t(4)
+\frac{207}{1054}t(2)t(5)-\frac{27}{68}\zt(\bar5,1,1)
+\frac9{136}\zt(\bar3,3,1)\\
&+\frac{61}{168}t(6)\log2-\frac9{28}t(3)^2\log2\\
t(3,1,1,2)&=-\frac{126255}{69088}t(7)+\frac{13301}{14280}t(3)t(4)
+\frac{365}{527}t(2)t(5)-\frac14t(3)\zt(\bar3,1)-\frac{21}{34}\zt(\bar5,1,1)\\
&+\frac7{68}\zt(\bar3,3,1)\\
t(2,3,1,1)&=-\frac{8297}{69088}t(7)+\frac{152}{357}t(3)t(4)
-\frac{2921}{12648}t(2)t(5)-\frac12t(2)\zt(\bar3,1,1)+\frac1{34}\zt(\bar5,1,1)\\
&+\frac5{136}\zt(\bar3,3,1)
-\frac2{21}t(6)\log2-\frac3{196}t(3)^2\log2-\frac12t(2)\zt(\bar3,1)\log2
+\frac14\zt(\bar5,1)\log2\\
&-\frac14t(5)\log^22+\frac27t(2)t(3)\log^22\\
t(2,1,3,1)&=\frac{43073}{69088}t(7)-\frac{11813}{7140}t(3)t(4)
+\frac{1813}{2108}t(2)t(5)+t(2)\zt(\bar3,1,1)+\frac14t(3)\zt(\bar3,1)\\
&+\frac{27}{68}\zt(\bar5,1,1)-\frac9{136}\zt(\bar3,3,1)
+\frac{27}{112}t(6)\log2-\frac5{28}t(3)^2\log2+\frac12t(2)\zt(\bar3,1)\log2\\
t(2,1,1,3)&=-\frac{126255}{69088}t(7)+\frac{2587}{1785}t(3)t(4)
+\frac{1169}{4216}t(2)t(5)-\frac12t(2)\zt(\bar3,1,1)-\frac14t(3)\zt(\bar3,1)\\
&-\frac{21}{34}\zt(\bar5,1,1)+\frac7{68}\zt(\bar3,3,1)\\
t(2,2,2,1)&=-\frac1{32}t(7)-\frac3{56}t(3)t(4)+\frac{15}{248}t(2)t(5)+\frac1{48}t(6)\log2\\
t(2,2,1,2)&=-\frac{15}{32}t(7)-\frac3{56}t(3)t(4)+\frac{53}{124}t(2)t(5)\\
t(2,1,2,2)&=-\frac{15}{32}t(7)-\frac1{14}t(3)t(4)+\frac{71}{248}t(2)t(5)\\
\end{align*}
\begin{align*}
t(3,1,1,1,1)&=\frac{15729}{17272}t(7)-\frac{1259}{2380}t(3)t(4)
-\frac{431}{1581}t(2)t(5)+\frac{93}{272}\zt(\bar5,1,1)
+\frac5{136}\zt(\bar3,3,1)\\
&-\frac12\zt(\bar3,1,1,1,1)+\frac5{56}t(6)\log2-\frac{27}{196}t(3)^2\log2
+\frac9{16}\zt(\bar5,1)\log2\\
&-\frac12\zt(\bar3,1,1,1)\log2-\frac{23}{496}t(5)\log^22
+\frac5{42}t(2)t(3)\log^22-\frac14\zt(\bar3,1,1)\log^22\\
&-\frac{37}{360}t(4)\log^32-\frac1{12}\zt(\bar3,1)\log^32
+\frac1{24}t(3)\log^42\\
t(2,2,1,1,1)&=\frac5{2032}t(7)-\frac2{21}t(3)t(4)+\frac{55}{744}t(2)t(5)
+\frac14t(2)\zt(\bar3,1,1)-\frac1{16}\zt(\bar5,1,1)\\
&+\frac{23}{336}t(6)\log2-\frac{2}{49}t(3)^2\log2
+\frac14t(2)\zt(\bar3,1)\log2-\frac1{16}\zt(\bar5,1)\log2\\
&+\frac1{16}t(5)\log^22
-\frac3{28}t(2)t(3)\log^22+\frac1{24}t(4)\log^32\\
t(2,1,2,1,1)&=\frac{341}{2032}t(7)+\frac27t(3)t(4)-\frac{89}{248}t(2)t(5)
-\frac14t(2)\zt(\bar3,1,1)-\frac18t(3)\zt(\bar3,1)\\
&-\frac1{16}\zt(\bar3,3,1)
-\frac{11}{28}t(6)\log2+\frac{121}{392}t(3)^2\log2
-\frac38\zt(\bar5,1)\log2\\
&+\frac38t(5)\log^22-\frac14t(2)t(3)\log^22\\
t(2,1,1,2,1)&=\frac{13353}{34544}t(7)+\frac{419}{14280}t(3)t(4)
-\frac{1529}{4216}t(2)t(5)-\frac14t(2)\zt(\bar3,1,1)
+\frac{21}{136}\zt(\bar5,1,1)\\
&-\frac7{272}\zt(\bar3,3,1)
-\frac1{16}t(6)\log2+\frac18t(3)^2\log2
-\frac14t(2)\zt(\bar3,1)\log2\\
t(2,1,1,1,2)&=\frac{21989}{17272}t(7)-\frac{10093}{14280}t(3)t(4)
-\frac{1715}{4216}t(2)t(5)+\frac14t(2)\zt(\bar3,1,1)+\frac18t(3)\zt(\bar3,1)\\
&+\frac{21}{68}\zt(\bar5,1,1)-\frac7{136}\zt(\bar3,3,1)\\
t(2,1,1,1,1,1)&=-\frac{2407}{4064}t(7)+\frac7{30}t(3)t(4)
+\frac{26}{93}t(2)t(5)
-\frac{3}{16}\zt(\bar5,1,1)+\frac14\zt(\bar3,1,1,1,1)\\
&-\frac3{112}t(6)\log2+\frac2{49}t(3)^2\log2-\frac3{16}\zt(\bar5,1)\log2
+\frac14\zt(\bar3,1,1,1)\log2\\
&-\frac{35}{496}t(5)\log^22
+\frac1{21}t(2)t(3)\log^22+\frac18\zt(\bar3,1,1)\log^22\\
&+\frac{11}{360}t(4)\log^32+\frac1{24}\zt(\bar3,1)\log^32
-\frac1{48}t(3)\log^42+\frac1{120}t(2)\log^52
\end{align*}
\section*{Appendix B: Multiple $t$-values in terms of Saha elements}
\par\noindent
\[
t(4)=4t(2,2)
\]
\begin{align*}
t(5)&=7t(3,2)+6t(2,1,2)\\
t(4,1)&=\frac12t(3,2)-t(2,1,2)+4t(2,2,1)\\
t(2,3)&=\frac52t(3,2)+t(2,1,2)
\end{align*}
\begin{align*}
t(6)&=48t(2,2,2)\\
t(5,1)&=-\frac{19}9t(3,1,2)-\frac{25}9t(2,2,2)+7t(3,2,1)+6t(2,1,2,1)\\
t(4,2)&=-\frac49t(3,1,2)+\frac89t(2,2,2)\\
t(3,3)&=-\frac{14}9t(3,1,2)+\frac{28}9t(2,2,2)\\
t(2,4)&=\frac49t(3,1,2)+\frac{100}9t(2,2,2)\\
t(4,1,1)&=\frac1{18}t(3,1,2)-\frac{173}{18}t(2,2,2)+4t(2,1,1,2)
+\frac12t(3,2,1)-t(2,1,2,1)+4t(2,2,1,1)\\
t(2,3,1)&=-\frac56t(3,1,2)-\frac{29}6t(2,2,2)+2t(2,1,1,2)+\frac52t(3,2,1)
+t(2,1,2,1)\\
t(2,1,3)&=-\frac29t(3,1,2)+\frac{85}9t(2,2,2)-2t(2,1,1,2)
\end{align*}
\begin{align*}
t(7)&=-\frac{211}{87}t(3,2,2)+\frac{62}{29}t(2,2,1,2)-\frac{152}{29}t(2,1,2,2)\\
t(6,1)&=\frac{4723}{174}t(3,2,2)+\frac{233}{29}t(2,2,1,2)
+\frac{20}{29}t(2,1,2,2)+48t(2,2,2,1)\\
t(5,2)&=\frac{1397}{348}t(3,2,2)+\frac{151}{58}t(2,2,1,2)+\frac2{29}t(2,1,2,2)\\
t(4,3)&=\frac{1861}{174}t(3,2,2)+\frac{151}{29}t(2,2,1,2)+\frac4{29}t(2,1,2,2)\\
t(3,4)&=\frac{797}{348}t(3,2,2)+\frac{127}{58}t(2,2,1,2)-\frac6{29}t(2,1,2,2)\\
t(2,5)&=-\frac{509}{174}t(3,3,2)+\frac{33}{29}t(2,2,1,2)-\frac{36}{29}t(2,1,2,2)\\
t(5,1,1)&=\frac{10405}{4176}t(3,2,2)+\frac{3119}{696}t(2,2,1,2)
+\frac{167}{174}t(2,1,2,2)+7t(3,1,1,2)\\
&+6t(2,1,1,1,2)-\frac{19}9t(3,1,2,1)
-\frac{25}9t(2,2,2,1)+7t(3,2,1,1)+6t(2,1,2,1,1)\\
t(4,2,1)&=-\frac{8735}{4176}t(3,2,2)-\frac{709}{696}t(2,2,1,2)
-\frac{7}{174}t(2,1,2,2)-\frac49t(3,1,2,1)+\frac89t(2,2,2,1)\\
t(4,1,2)&=-\frac{12235}{1392}t(3,2,2)-\frac{1081}{232}t(2,2,1,2)
-\frac{11}{58}t(2,1,2,2)\\
t(3,3,1)&=\frac{1745}{1044}t(3,2,2)-\frac{197}{174}t(2,2,1,2)
+\frac2{87}t(2,1,2,2)+2t(3,1,1,2)-\frac{14}9t(3,1,2,1)\\
&+\frac{28}9t(2,2,2,1)\\
t(3,1,3)&=-\frac{595}{1392}t(3,2,2)+\frac{127}{232}t(2,2,1,2)
-\frac3{58}t(2,1,2,2)-2t(3,1,1,2)\\
t(2,3,2)&=-\frac{295}{348}t(3,2,2)-\frac{93}{58}t(2,2,1,2)-\frac2{29}t(2,1,2,2)\\
t(2,2,3)&=\frac{649}{464}t(3,2,2)+\frac{57}{232}t(2,2,1,2)-\frac5{58}t(2,1,2,2)\\
t(2,4,1)&=\frac{11887}{2088}t(3,2,2)+\frac{269}{348}t(2,2,1,2)
+\frac{11}{87}t(2,1,2,2)+\frac49t(3,1,2,1)+\frac{100}9t(2,2,2,1)\\
t(2,1,4)&=\frac{55}{48}t(3,2,2)-\frac38t(2,2,1,2)-\frac12t(2,1,2,2)
\end{align*}
\begin{align*}
t(4,1,1,1)&=-\frac{11503}{2088}t(3,2,2)-\frac{539}{348}t(2,2,1,2)
-\frac{25}{174}t(2,1,2,2)+\frac12t(3,1,1,2)\\
&-t(2,1,1,1,2)+\frac1{18}t(3,1,2,1)-\frac{173}{18}t(2,2,2,1)+4t(2,1,1,2,1)\\
&+\frac12t(3,2,1,1)-t(2,1,2,1,1)+4t(2,2,1,1,1)\\
t(2,3,1,1)&=-\frac{1355}{696}t(3,2,2)-\frac{31}{116}t(2,2,1,2)
+\frac{9}{58}t(2,1,2,2)+\frac52t(3,1,1,2)+t(2,1,1,1,2)\\
&-\frac56t(3,1,2,1)-\frac{29}6t(2,2,2,1)
+2t(2,1,1,2,1)+\frac52t(3,2,1,1)+t(2,1,2,1,1)\\
t(2,1,3,1)&=\frac{7787}{1044}t(3,2,2)+\frac{859}{174}t(2,2,1,2)
+\frac{35}{87}t(2,1,2,2)+2t(2,1,1,1,2)-\frac29t(3,1,2,1)\\
&+\frac{85}9t(2,2,2,1)-2t(2,1,1,2,1)\\
t(2,1,1,3)&=-\frac{113}{174}t(3,2,2)-\frac{37}{29}t(2,2,1,2)
-\frac{15}{29}t(2,1,2,2)-2t(2,1,1,1,2)
\end{align*}
\end{document}